\documentclass[11pt]{amsart}
\usepackage[margin=1in]{geometry}
\usepackage{amsmath,amssymb,mathtools}
\usepackage[hidelinks]{hyperref}
\hypersetup{pdftitle={A reciprocal Schoenberg inequality and its extensions},
 pdfsubject={Reciprocal moments of critical points and positive-charge equilibrium points},
 pdfkeywords={Schoenberg inequality, critical points, exterior powers, normal matrices}}
\numberwithin{equation}{section}
\newtheorem{theorem}{Theorem}[section]
\newtheorem{corollary}[theorem]{Corollary}
\newtheorem{lemma}[theorem]{Lemma}
\newtheorem{proposition}[theorem]{Proposition}
\theoremstyle{remark}
\newtheorem{remark}[theorem]{Remark}
\newtheorem{example}[theorem]{Example}
\DeclareMathOperator{\tr}{tr}
\DeclareMathOperator{\diag}{diag}
\DeclareMathOperator{\Rea}{Re}
\DeclareMathOperator{\Ima}{Im}

\newcommand{\R}{\mathbb R}

\newcommand{\abs}[1]{\left|#1\right|}
\newcommand{\E}{\mathcal E}
\newcommand{\B}{\mathcal B}
\title[Reciprocal Schoenberg inequalities]{A reciprocal Schoenberg inequality and its extensions}
\author{Olga Holtz}
\address{University of California, Berkeley}
\date{}
\subjclass[2020]{Primary 30C10; Secondary 15A18, 15A42, 30C15}
\keywords{Zeros of polynomials, critical points, Schoenberg inequality, majorization, exterior powers, normal matrices}
\begin{document}
\begin{abstract}
We study reciprocal moments of the critical points of a polynomial with a simple zero at the origin. A positive similarity transforms a reciprocal derivative companion into a complex symmetric matrix. This gives a short proof of the reciprocal Schoenberg inequality and a refinement for every elementary symmetric function. We determine the optimal constants in the corresponding inequalities for every positive exponent, including the sublinear range, and extend the results to arbitrary positive charges. We also characterize the normal powers of the companion and of its symmetric representative. The resulting criterion gives the complete equality cases in two quartic inequalities and explains the appearance of noncollinear equality configurations at higher orders. We also obtain a closed generating determinant and a quantitative collinearity estimate.
\end{abstract}
\maketitle

\section{Introduction}

The Gauss--Lucas theorem places the critical points of a polynomial in the convex hull of its zeros. Moment inequalities give a different kind of information: they compare the distribution of the critical points with that of the zeros. A distinguished example is Schoenberg's inequality. If a polynomial of degree $N\ge2$ has zeros $\zeta_1,\ldots,\zeta_N$ and critical points $\omega_1,\ldots,\omega_{N-1}$, then
\begin{equation}\label{eq:classical}
 \sum_{j=1}^{N-1}|\omega_j|^2
 \le \frac{N-2}{N}\sum_{j=1}^N|\zeta_j|^2
       +\frac1{N^2}\abs{\sum_{j=1}^N\zeta_j}^2.
\end{equation}
Equality holds precisely when the zeros are collinear. Schoenberg proposed the centered form of this inequality in 1986 \cite{Schoenberg}. Pereira \cite{Pereira} and Malamud \cite{Malamud} proved it through matrix representations of differentiation. These representations also connect polynomial geometry with the majorization relations of de Bruijn and Springer.

Cheung and Ng \cite{CN} developed derivative companion matrices and used them to prove higher-order inequalities, including a conjecture of de Bruin and Sharma. Kushel and Tyaglov \cite{KT} obtained a sharper quartic inequality through circulant matrices. Borcea \cite{Borcea} extended the majorization framework to equilibrium points of arbitrary positive charge distributions. Exterior powers already play an essential role in this literature: they replace individual eigenvalues by products of eigenvalues.

Negative moments arise naturally when one measures distances from a distinguished zero. We therefore write
\begin{equation}\label{eq:P}
 P(z)=z\prod_{j=1}^n(z-z_j),\qquad z_j\ne0,
\end{equation}
and denote the zeros of $P'$ by $w_1,\ldots,w_n$. Since $P'(0)\ne0$, all the $w_j$ are nonzero. Tang and Zhang \cite[Conjecture 6.3]{TZ} proposed the reciprocal inequality
\begin{equation}\label{eq:reciprocal}
 \sum_{j=1}^n|w_j|^{-2}
 \le 3\sum_{j=1}^n|z_j|^{-2}
       +\abs{\sum_{j=1}^nz_j^{-1}}^2.
\end{equation}

Their Conjecture 1.13 asks for the optimal power-sum bound
\begin{equation}\label{eq:TZpower}
 \sum_{j=1}^n|w_j|^{-p}
 \le \frac{(n+1)^p+n-1}{n}\sum_{j=1}^n|z_j|^{-p},
 \qquad p\ge1.
\end{equation}
We prove both inequalities, give their equality cases, and place them in a larger family. At $p=1$, the equality assertion in Conjecture 1.13 requires a correction: the nonzero zeros need only lie on one ray, rather than coincide.

The basic observation is simple. Put $D=\diag(z_1^{-1},\ldots,z_n^{-1})$ and let $J$ be the all-ones matrix. The matrix $Q=D(I+J)$ has eigenvalues $w_j^{-1}$. Its similar representative
\[
 C=(I+J)^{1/2}D(I+J)^{1/2}
\]
is complex symmetric and has a smaller Frobenius norm. Schur's inequality applied to $C$ proves \eqref{eq:reciprocal}; applying it to exterior powers of $C$ retains the same collinearity equality condition. A second factorization removes the phases of the $z_j^{-1}$ and leads, through the Araki--Lieb--Thirring inequality, to sharp constants for all positive exponents.

For unit charges, the phase comparison itself is contained in Cheung--Ng's Theorem 2.1 after inversion and the use of the product identity. Likewise, the endpoint $p=1$ of our elementary symmetric bounds follows from Borcea's product majorization. Our bounds for $p>1$ improve the coefficients supplied by that majorization, as explained in Remark~\ref{rem:comparison}. The additional phase-sensitive refinements and the equality analysis use the same companion, but require its symmetric representative. In particular, a divided-difference formula characterizes all its normal powers. This gives the complete equality classification left open in \cite[Remark 7.2]{TZ} and shows why the refined quartic inequality has only collinear equality cases, whereas higher even orders need not.

\section{Definitions and notation}

Throughout, $z_1,\ldots,z_n$ are nonzero complex numbers and $r_1,\ldots,r_n$ are positive real numbers. Repetitions among the $z_j$ are allowed. Define
\begin{equation}\label{eq:Fr}
 F_r(z)=\prod_{j=1}^n(z-z_j)
       +z\sum_{j=1}^nr_j\prod_{\ell\ne j}(z-z_\ell).
\end{equation}
Let $w_1,\ldots,w_n$ be its zeros, counted with multiplicity, and put
\[
 b_j=z_j^{-1},\qquad \lambda_j=w_j^{-1},\qquad
 \sigma=\sum_jr_j,\qquad u=(\sqrt{r_1},\ldots,\sqrt{r_n})^T.
\]
The polynomial $F_r$ has degree $n$, leading coefficient $1+\sigma$, and a nonzero constant term. Thus all these reciprocals are defined.

For distinct positions, the $w_j$ are the zeros of
\[
 \frac1z+\sum_j\frac{r_j}{z-z_j},
\]
the force function associated with positive charges $1,r_1,\ldots,r_n$. For repeated positions we use the polynomial definition \eqref{eq:Fr}, so that roots lost through rational cancellation are retained. Arbitrary positive charges $\alpha_0,\ldots,\alpha_n$ are obtained by taking $r_j=\alpha_j/\alpha_0$. The choice $r_j=1$ gives $F_r=P'$ for \eqref{eq:P}. More generally, $r_j=1/\ell$ gives the $n$ nonzero critical points of $z^\ell\prod_j(z-z_j)$, where $\ell$ is a positive integer.

Our matrices are
\begin{equation}\label{eq:matrices}
 D=\diag(b_1,\ldots,b_n),\quad K=I+uu^T,\quad
 H=K^{1/2},\quad Q=DK,\quad C=HDH.
\end{equation}
Here $H$ is the positive square root. The matrices $K,H$ are real symmetric, $C^T=C$, and $C=HQH^{-1}$. Lemma~\ref{lem:representation} identifies their spectrum with $(\lambda_j)$.

For a vector $x$, let $e_k(x)$ denote its $k$th elementary symmetric function. We abbreviate
\[
 \E_{k,p}(w)=e_k(|w_1|^{-p},\ldots,|w_n|^{-p}),\qquad
 b_I=\prod_{j\in I}b_j,\qquad r_I=\sum_{j\in I}r_j,
\]
with $b_\varnothing=1$. A sum over $|I|=k$ runs over all $k$-element subsets of $\{1,\ldots,n\}$. The Frobenius and operator norms are denoted by $\|\cdot\|_F$ and $\|\cdot\|_{\rm op}$. We use $X^*$ for the conjugate transpose, $\bar X$ for entrywise conjugation, and $[X,Y]=XY-YX$. Collinearity below always means collinearity on a line through the origin. It is equivalent for the configurations $(z_j)$ and $(b_j)$.

\section{Main results}

\subsection{Sharp inequalities for all positive exponents}

\begin{theorem}\label{thm:allpowers}
For $1\le k\le n$ and $p\ge1$,
\begin{equation}\label{eq:weightedpowers}
 \E_{k,p}(w)\le
 \sum_{|I|=k}\left(1+\frac{(1+\sigma)^p-1}{\sigma}r_I\right)|b_I|^p.
\end{equation}
For $0<p<1$,
\begin{equation}\label{eq:sublinear}
 \E_{k,p}(w)\le\sum_{|I|=k}(1+r_I)^p|b_I|^p.
\end{equation}
If $k<n$, equality in \eqref{eq:weightedpowers} holds exactly when all $b_j$ coincide for $p>1$, and exactly when they have the same argument for $p=1$. Inequality \eqref{eq:sublinear} is strict for $k<n$, but each of its coefficients is individually optimal. For $k=n$, both inequalities are identities for every $p>0$.
\end{theorem}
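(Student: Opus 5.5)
The plan is to compare $\E_{k,p}(w)$ with a spectral quantity of a \emph{positive} matrix. That matrix is built from $|D|$ and $K$, and removes the phases of the $b_j$; its exterior powers then carry everything. Write $D=U|D|$ with $U=\diag(b_j/|b_j|)$ unitary and $|D|=\diag(|b_j|)$. By Lemma~\ref{lem:representation} the $\lambda_j$ are the eigenvalues of $Q=DK=U|D|K$. This matrix is similar to
\[
|D|^{1/2}K|D|^{1/2}U=MU,\qquad M=|D|^{1/2}K|D|^{1/2}>0.
\]
Since $U$ is unitary, the singular values of $MU$ are the eigenvalues $\mu_1\ge\dots\ge\mu_n>0$ of $M$. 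Weyl's inequality then gives the weak log-majorization $\prod_{j\le m}|\lambda_j|\le\prod_{j\le m}\mu_j$ for every $m$, with equality at $m=n$. The map $x\mapsto e_k(e^{px_1},\dots,e^{px_n})$ is Schur-convex and increasing on $\R^n$, so this majorization yields
\[
\E_{k,p}(w)\le e_k(\mu^p)=\tr\bigl(\Lambda^kM\bigr)^p
\]
for every $p>0$. For $k=n$ every step is an equality, since $|\det Q|=|b_{\{1,\dots,n\}}|\det K$ and $\det K=1+\sigma$. This gives the identity case.

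For $p\ge1$, I apply the Araki--Lieb--Thirring inequality to $A=\Lambda^k|D|$ and $B=\Lambda^kK$. Since $\Lambda^kM=A^{1/2}BA^{1/2}$, it gives
\[
\tr(\Lambda^kM)^p\le\tr\bigl(A^{p/2}B^pA^{p/2}\bigr)=\sum_{|I|=k}|b_I|^p\det\bigl(K^p[I,I]\bigr).
\]
Here $B^p=\Lambda^k(K^p)$, and I take the $I$th diagonal entries. Because $K=I+uu^T$ has eigenvalues $1+\sigma$ on $u$ and $1$ on $u^\perp$, we have $K^p=I+c_p uu^T$ with $c_p=((1+\sigma)^p-1)/\sigma$. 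The principal minor of this rank-one perturbation is $1+c_pr_I$, which is \eqref{eq:weightedpowers}.

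For $0<p<1$ the Araki--Lieb--Thirring inequality reverses, so I use Schur--Horn instead. The eigenvalues of $\Lambda^kM$ majorize its diagonal entries $|b_I|\det K[I,I]=|b_I|(1+r_I)$. Since $t\mapsto t^p$ is concave, this gives $\tr(\Lambda^kM)^p\le\sum_I(1+r_I)^p|b_I|^p$, which is \eqref{eq:sublinear}.

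Equality analysis for $k<n$ is the main obstacle. Equality in the Schur--Horn step forces $\Lambda^kM$ to be diagonal. For $k<n$ this is impossible, since $K$ has all off-diagonal entries positive and the compound then has nonzero off-diagonal entries. This gives strictness of \eqref{eq:sublinear}.

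For $p>1$, equality in the Araki--Lieb--Thirring step forces $A$ and $B$ to commute. Because $uu^T$ has full support, that forces all $|b_j|$ equal. Equality in the Weyl step forces $MU$ normal, and I expect the commutator computation then to force $U$ scalar. Together these give the condition that all $b_j$ coincide. At $p=1$ the Araki--Lieb--Thirring step is trivially an identity, and only the normality of $MU$, i.e.\ a common argument, is needed. Conversely, if the $b_j$ share an argument, then $Q$ is a unimodular multiple of a matrix similar to $M$, and equality holds. I would check the normality claims by expanding $[MU,(MU)^*]$ entrywise.

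For individual optimality of the coefficients in \eqref{eq:sublinear}, I would fix $I$ and let $b_j=t^{-1}$ for $j\in I$ and $b_j\to0$ otherwise, or use an appropriate degeneration that decouples the block $I$. The plan is to show that the left side then asymptotically equals $(1+r_I)^p|b_I|^p$. This amounts to tracking the limiting spectrum of $Q$ as the off-block entries vanish, which I expect to be a careful but routine perturbation argument.
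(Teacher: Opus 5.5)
Your proposal is correct and follows the paper's proof essentially step for step: your $M=|D|^{1/2}K|D|^{1/2}$ is the paper's $A$ (you use $MU$ where the paper uses $VAV$), and the remaining steps coincide: Weyl log-majorization on compounds, Araki--Lieb--Thirring applied to $\Lambda^k|D|$ and $\Lambda^kK$ with $K^p=I+c_p uu^T$, Schur--Horn plus concavity for $0<p<1$, the same commutation and non-diagonality equality analysis, and the same $\varepsilon\downarrow 0$ degeneration for optimality. Two points need tightening: equality in your Schur-convex step requires \emph{strict} convexity of $x\mapsto e_k(e^{px_1},\ldots,e^{px_n})$, which holds exactly because $k<n$ (the paper instead goes through normality of $M^{[k]}$ and Lemma~\ref{lem:compoundnormal}), and your commutation and non-diagonality arguments rely on the explicit minors $\det K[A\cup\{i\},A\cup\{j\}]=\pm\sqrt{r_ir_j}\neq 0$, not merely on $uu^T$ having full support.
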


\begin{corollary}\label{cor:constants}
Suppose $r_1=\cdots=r_n=\rho>0$. The best constant in
\[
 \E_{k,p}(w)\le A_{n,k,p}(\rho)\,e_k(|z_1|^{-p},\ldots,|z_n|^{-p})
\]
is
\begin{equation}\label{eq:constants}
 A_{n,k,p}(\rho)=
 \begin{cases}
 (1+k\rho)^p,&0<p\le1,\\[1mm]
 1+\dfrac{k}{n}\bigl((1+n\rho)^p-1\bigr),&p\ge1.
 \end{cases}
\end{equation}
For $k<n$, the equality cases are those in Theorem~\ref{thm:allpowers}; in the sublinear range the optimal constant is not attained. For $k=n$, equality always holds.
\end{corollary}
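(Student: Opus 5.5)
The corollary should follow from Theorem~\ref{thm:allpowers} by specializing to equal charges and then exhibiting configurations that attain or approach the constants. Put $r_j=\rho$, so that $\sigma=n\rho$ and $r_I=k\rho$ for $|I|=k$. Then every coefficient on the right of \eqref{eq:weightedpowers} equals
\[
1+\frac{(1+n\rho)^p-1}{n\rho}\,k\rho=1+\frac kn\bigl((1+n\rho)^p-1\bigr),
\]
and every coefficient in \eqref{eq:sublinear} equals $(1+k\rho)^p$. Since $\sum_{|I|=k}|b_I|^p=e_k(|z_1|^{-p},\ldots,|z_n|^{-p})$, both upper bounds $\E_{k,p}(w)\le A_{n,k,p}(\rho)\,e_k$ are immediate. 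At $p=1$ the two formulas agree, because both equal $1+k\rho$. For $k=n$, the last clause of the theorem shows that equality always holds; indeed $A_{n,n,p}=(1+n\rho)^p$ in both ranges.

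For optimality with $p\ge1$, I would take all $b_j$ equal to a common value $b$. Then $F_r(z)=(z-z_1)^{n-1}\bigl((1+n\rho)z-z_1\bigr)$, so $n-1$ of the $w_j$ equal $z_1$ and one equals $z_1/(1+n\rho)$. By the equality clause of Theorem~\ref{thm:allpowers}, equality is attained for this configuration, so the constant cannot be lowered. The equality cases for $k<n$ are then exactly those of the theorem. The one point to check is that equality in the corollary, for equal weights, is the same as equality in \eqref{eq:weightedpowers}, and this holds because the two right-hand sides coincide.

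For $0<p<1$, strict inequality for $k<n$ comes from the theorem. Optimality is the main obstacle. I expect the theorem's statement that ``each coefficient is individually optimal'' to supply this, but I would make it explicit with a degenerating family. Take $b_1=\cdots=b_k=1$ and $b_{k+1},\ldots,b_n=\varepsilon\to0$, which sends $z_{k+1},\ldots,z_n$ to infinity. In that limit $\omega_j\to$ zeros of $z^{\ell}$-type reductions: $F_r/z^{n-k}$ behaves like $(1+k\rho)z\,(z-1)^{k-1}\cdot(\text{const})$ plus terms that vanish. So $k-1$ of the $w_j$ tend to $1$ and one tends to $1/(1+k\rho)$, while the remaining $n-k$ roots have $|w_j|^{-1}=O(\varepsilon)$. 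In that case
\[
\E_{k,p}(w)\to(1+k\rho)^p, \qquad e_k(|b|^p)\to1,
\]
which gives ratio $(1+k\rho)^p$ in the limit.

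The delicate part is making the root asymptotics rigorous. I would rescale to $\lambda=1/w$ and use continuity of the eigenvalues of $Q=DK$ as $D\to\diag(1,\ldots,1,0,\ldots,0)$. The limit matrix has eigenvalues $0$ with multiplicity $n-k$, together with the eigenvalues of the leading $k\times k$ block of $K$ (here all $b_j=1$). That block is $I+\rho J_k$, whose eigenvalues are $1+k\rho$ once and $1$ with multiplicity $k-1$. Since $e_k$ of the $|\lambda_j|^p$ is continuous, the limit follows. It also shows that the constant is approached but, by strictness, never attained.
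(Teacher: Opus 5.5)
Your proposal is correct and follows the paper's argument. You specialize the coefficients of Theorem~\ref{thm:allpowers} via $\sigma=n\rho$ and $r_I=k\rho$, attain the constant for $p\ge1$ with all $b_j$ equal (you cite the theorem's equality clause where the paper evaluates the binomial average directly), and approach it for $0<p<1$ with $b_j=1$ on $I$ and $b_j=\varepsilon\downarrow0$ off $I$, using continuity of the eigenvalues of $Q=DK$. One slip in your heuristic: the limit of $\varepsilon^{n-k}F_r(z)$ is, up to sign, $(z-1)^{k-1}\bigl((1+k\rho)z-1\bigr)$, not $(1+k\rho)z(z-1)^{k-1}$; your eigenvalue-continuity argument does not rely on it, so the proof stands.
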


Taking $k=1$ and $\rho=1$ proves \eqref{eq:TZpower}. For example, $P(z)=z(z-1)(z-2)$ attains equality at $p=1$, although its nonzero zeros are distinct. At the other end, $k=n$ records the product identity
\begin{equation}\label{eq:product}
 \prod_j\lambda_j=(1+\sigma)\prod_jb_j.
\end{equation}

\subsection{A refinement retaining the phases}

\begin{theorem}\label{thm:exterior}
For $1\le k\le n$, define
\begin{align}\label{eq:Bk}
 \B_k(b;r)={}&
 \sum_{|I|=k}\left((1+r_I)^2-\sum_{j\in I}r_j^2\right)|b_I|^2\notag\\
 &+\sum_{|A|=k-1}|b_A|^2\abs{\sum_{j\notin A}r_jb_j}^2.
\end{align}
Then
\begin{equation}\label{eq:phasebound}
 \E_{k,2}(w)\le\B_k(b;r).
\end{equation}
For $k<n$, equality holds if and only if the $z_j$ are collinear. For $k=n$, equality always holds. Moreover,
\begin{align}\label{eq:exteriordeficit}
 &\sum_{|I|=k}\bigl(1+(\sigma+2)r_I\bigr)|b_I|^2-\B_k(b;r)\notag\\
 &\hspace{12mm}=\sum_{|A|=k-1}|b_A|^2
       \sum_{\substack{i<j\\i,j\notin A}}r_ir_j|b_i-b_j|^2.
\end{align}
\end{theorem}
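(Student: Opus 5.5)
Our plan is to apply Schur's inequality to the $k$th exterior power of the complex symmetric representative $C=HDH$. By Lemma~\ref{lem:representation}, the eigenvalues of $C$ are $\lambda_1,\ldots,\lambda_n$. Hence the eigenvalues of $\Lambda^kC$ are the products $\lambda_I$, and Schur's inequality gives
\[
 \E_{k,2}(w)=\sum_{|I|=k}|\lambda_I|^2\le\|\Lambda^kC\|_F^2,
\]
with equality iff $\Lambda^kC$ is normal. The whole proof then reduces to three tasks: computing $\|\Lambda^kC\|_F^2$ in closed form, identifying it with $\B_k(b;r)$, and deciding when $\Lambda^kC$ is normal.

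\textbf{Step 1: the Frobenius norm.} We have $\|\Lambda^kC\|_F^2=\tr\Lambda^k(C^*C)$, and
\[
 C^*C=H\bar D H^2 D H=H\bar D K D H,
\]
which is similar to $\bar D K D K$. Therefore $\|\Lambda^kC\|_F^2=e_k(\text{eigenvalues of }\bar DKDK)$, which equals the sum of the principal $k\times k$ minors of $M=\bar DKDK$. Write $K=I+uu^T$, so that $DK=D+Duu^T$ and $\bar DK=\bar D+\bar Duu^T$. Then $M$ is a product of diagonal matrices and rank-one perturbations.

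To extract the principal minors we use the Cauchy--Binet formula:
\[
 \det M[I,I]=\sum_{J}\det(\bar DK)[I,J]\,\det(DK)[J,I].
\]
Since $K$ is the identity plus a rank-one matrix, each minor $K[I,J]$ is nonzero only when $|I\triangle J|\le2$. Explicitly,
\[
 \det K[I,I]=1+r_I,\qquad
 \det K[I,J]=\pm\sqrt{r_ir_j}\quad\text{when }I=A\cup\{i\},\ J=A\cup\{j\},\ i\ne j.
\]
This yields
\[
 \|\Lambda^kC\|_F^2=\sum_{|I|=k}(1+r_I)^2|b_I|^2+\sum_{|A|=k-1}|b_A|^2\sum_{i\ne j\notin A}r_ir_j\,\bar b_i b_j,
\]
where the signs cancel because the same minor appears twice in conjugate form. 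The diagonal case $i=j$ is absent in the second sum. Adding and subtracting $\sum_{j\notin A}r_j^2|b_j|^2$ rewrites the inner sum as
\[
 \Bigl|\sum_{j\notin A}r_jb_j\Bigr|^2-\sum_{j\notin A}r_j^2|b_j|^2 .
\]
Regrouping $|b_A|^2r_j^2|b_j|^2=r_j^2|b_I|^2$ with $I=A\cup\{j\}$ produces exactly the term $-\sum_{j\in I}r_j^2|b_I|^2$ in \eqref{eq:Bk}. So $\|\Lambda^kC\|_F^2=\B_k(b;r)$, which proves \eqref{eq:phasebound}.

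\textbf{Step 2: the deficit identity.} Identity \eqref{eq:exteriordeficit} is pure algebra. Expand
\[
 \sum_{i<j}r_ir_j|b_i-b_j|^2=\Bigl(\sum_{j\notin A}r_j\Bigr)\sum_{j\notin A}r_j|b_j|^2-\Bigl|\sum_{j\notin A}r_jb_j\Bigr|^2,
\]
with both sums over indices outside $A$. Multiply by $|b_A|^2$ and sum over $|A|=k-1$. Each $k$-set $I$ arises from $A=I\setminus\{j\}$ with weight $r_j(\sigma-r_A)$; summing over $j\in I$ gives
\[
 \sigma r_I-r_I^2+\sum_{j\in I}r_j^2 .
\]
Comparing with $(1+r_I)^2-\sum_{j\in I}r_j^2$ from \eqref{eq:Bk} gives the left side $1+(\sigma+2)r_I$. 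This also confirms the normalization in Step 1.

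\textbf{Step 3: equality.} For $k=n$, $\Lambda^nC$ is a scalar, so equality holds automatically; this is consistent with \eqref{eq:product}. For $k<n$ we need:
\[
 \Lambda^kC\ \text{normal}\iff\text{the } b_j \text{ are collinear through the origin}.
\]
If $b=e^{i\theta}\beta$ with $\beta$ real, then $C=e^{i\theta}H\diag(\beta)H$ is a unimodular scalar times a real symmetric matrix. Hence $C$ is normal, and so is $\Lambda^kC$.

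The converse is the main obstacle. Our first attempt is to use the equality case of Schur's inequality for $\Lambda^kC$ directly. Normality of $\Lambda^kC$ forces $C$ itself to be normal up to a degenerate case: if some products $\lambda_I$ vanish this needs care, but here all $\lambda_j\ne0$. Indeed, $\Lambda^kC$ normal with $C$ invertible implies $C$ normal when $k<n$, via the standard fact that exterior powers of an invertible non-normal matrix are non-normal.

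Once $C$ is normal, we compute
\[
 [C,C^*]=H\bigl(DK\bar D-\bar DKD\bigr)H .
\]
Its vanishing amounts to $DK\bar D=\bar DKD$. The diagonal entries already agree, and the off-diagonal entries give
\[
 \sqrt{r_ir_j}\,(b_i\bar b_j-\bar b_ib_j)=0,
\]
that is, $\Ima(b_i\bar b_j)=0$ for all $i,j$. This is exactly collinearity. If the cited fact about exterior powers is unavailable, we would instead express the Schur defect $\|\Lambda^kC\|_F^2-\sum_I|\lambda_I|^2$ via the Step 1 minor computation as a sum of nonnegative terms involving $|\Ima(b_i\bar b_j)|^2$, and read off collinearity from its vanishing.
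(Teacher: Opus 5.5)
Your proposal is correct and follows the paper's proof essentially step for step. Both apply Schur's inequality to $C^{[k]}$ and use the Cauchy--Binet expansion $\sum_{I,J}\bar b_Ib_J\det K[I,J]^2$, with the minors of $K=I+uu^T$ grouped by $A=I\cap J$. Both obtain the deficit from the weighted variance identity on each complement. Both settle equality through the commutator $H(vv^*-\bar v v^T)H$ together with the fact you cite as standard, which is exactly the paper's Lemma~\ref{lem:compoundnormal}; the paper proves it by showing $P\mapsto P^{[k]}$ is injective on positive definite matrices and applying this to $XX^*$ and $X^*X$.
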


For unit charges this reads
\begin{equation}\label{eq:unitexterior}
 \E_{k,2}(w)\le(k^2+k+1)e_k(|b|^2)
       +\sum_{|A|=k-1}|b_A|^2\abs{\sum_{j\notin A}b_j}^2.
\end{equation}
At $k=1$ it is precisely \eqref{eq:reciprocal}, including its complete equality characterization. Identity \eqref{eq:exteriordeficit} measures the improvement over Theorem~\ref{thm:allpowers} at $p=2$ by a sum of squared pairwise distances.

\subsection{Normal powers and even moments}

Define monic polynomials $f_m$ of degree $m$ by
\begin{equation}\label{eq:recurrence}
 f_1(t)=t,\qquad a_m=\sum_jr_jf_m(b_j),\qquad
 f_{m+1}(t)=t\bigl(f_m(t)+a_m\bigr).
\end{equation}
Thus $f_2(t)=t^2+st$, where $s=\sum_jr_jb_j$.

\begin{theorem}\label{thm:normalpowers}
For every positive integer $m$,
\begin{equation}\label{eq:evensandwich}
 \sum_j|w_j|^{-2m}\le\|C^m\|_F^2\le\|Q^m\|_F^2,
\end{equation}
and the second difference is the weighted variance
\begin{equation}\label{eq:variance}
 \|Q^m\|_F^2-\|C^m\|_F^2
 =\sigma\sum_jr_j|f_m(b_j)|^2-|a_m|^2.
\end{equation}
The following equality criteria hold:
\begin{enumerate}
\item $Q^m$ is normal if and only if $f_m(b_1)=\cdots=f_m(b_n)$.
\item $C^m$ is normal if and only if the $b_j$ are collinear or $f_m(b_1)=\cdots=f_m(b_n)$.
\end{enumerate}
Thus equality in the first inequality of \eqref{eq:evensandwich} has exactly the configurations in (2), while equality between its first and last terms has exactly those in (1). Every noncollinear equality configuration has at most $m$ distinct nonzero zeros.
\end{theorem}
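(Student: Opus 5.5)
The plan is to get the whole theorem from one structural identity: an explicit description of $Q^m$ in terms of a symmetric matrix $S_m$ and the polynomials $f_m$ of \eqref{eq:recurrence}. Put
\[
 S_m=(DK)^{m-1}D .
\]
Then $S_m^T=S_m$, because transposition reverses the word $DKD\cdots KD$, and $Q^m=S_mK$. Consequently
\[
 C^m=HQ^mH^{-1}=HS_mH,
\]
which is again complex symmetric. I would first show by induction that
\[
 S_mu=\bigl(u_jf_m(b_j)\bigr)_{j=1}^n .
\]
The case $m=1$ is $Du$. For the inductive step, $S_{m+1}u=DK S_mu=D\bigl(S_mu+u\,(u^TS_mu)\bigr)$. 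Since $u^TS_mu=\sum_jr_jf_m(b_j)=a_m$, the $j$th entry is $u_jb_j\bigl(f_m(b_j)+a_m\bigr)=u_jf_{m+1}(b_j)$, as required. From this, $u^TS_mu=a_m$ and $\|S_mu\|^2=\sum_jr_j|f_m(b_j)|^2$.

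\emph{The sandwich and the variance identity.} The first inequality in \eqref{eq:evensandwich} is Schur's inequality for $C^m$. Indeed, Lemma~\ref{lem:representation} identifies the spectrum of $C$ with $(\lambda_j)$, so $C^m$ has eigenvalues $w_j^{-m}$, and equality holds iff $C^m$ is normal. For the second, use $K=I+uu^T$ and $K^2=I+(2+\sigma)uu^T$ to compute
\[
 \|Q^m\|_F^2=\tr(S_mK^2\bar S_m)=\|S_m\|_F^2+(2+\sigma)\|S_mu\|^2 .
\]
Similarly, using $\bar S_m=S_m^*$ and $H^2=K$,
\[
 \|C^m\|_F^2=\tr(KS_mK\bar S_m)=\|S_m\|_F^2+2\|S_mu\|^2+|u^TS_mu|^2 .
\]
Subtracting gives $\sigma\|S_mu\|^2-|u^TS_mu|^2$, which is \eqref{eq:variance} by the formulas above. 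This is nonnegative by Cauchy--Schwarz, since $\bigl|\sum_j u_j\cdot u_jf_m(b_j)\bigr|^2\le\sigma\sum_jr_j|f_m(b_j)|^2$. It vanishes iff $f_m(b_j)$ is independent of $j$, because all $r_j>0$.

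\emph{Criterion (1).} $Q^m$ and $C^m$ have the same eigenvalues, so $Q^m$ is normal iff $\sum_j|w_j|^{-2m}=\|Q^m\|_F^2$. By the sandwich this happens iff $C^m$ is normal and the variance vanishes. Conversely, suppose $f_m(b_j)\equiv c$. Then $S_mu=cu$, and since $S_m$ is symmetric also $u^TS_m=cu^T$. Hence $S_m$ commutes with $K$ and with $H$, so $C^m=S_mK=Q^m$ and the two norms agree. It remains to show that this common matrix is normal. I would try to prove $S_m$ normal directly, or better, handle this together with criterion (2) below.

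\emph{Criterion (2) and the final claim.} For collinear $b_j=e^{i\theta}x_j$ with $x_j$ real, $C=e^{i\theta}HXH$ with $HXH$ real symmetric, so $C$ and all its powers are normal. The converse is the main obstacle: if $C^m$ is normal, then the $b_j$ are collinear or $f_m(b_j)$ is constant. My approach is to write out the entries of $Q^m$ explicitly. The cases $m=1,2$ suggest
\[
 (Q^m)_{ij}=\delta_{ij}b_i^m+u_iu_j\,b_i\,f_m[b_i,b_j],
\]
with $f_m[\cdot,\cdot]$ the divided difference, and this should be provable by the same induction. Since $C^m$ is symmetric, normality is equivalent to $C^m\overline{C^m}$ being real symmetric, i.e.\ Hermitian, i.e.\ to $[C^m,(C^m)^*]=0$. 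I would conjugate this condition back to a relation involving $Q^m$ and $K$, and read off entrywise identities. These should produce factors $\Ima(\bar b_ib_j)$, which vanish for all pairs only in the collinear case, multiplied by differences $f_m(b_i)-f_m(b_j)$. From that dichotomy both criterion (2) and the missing normality in (1) would follow; it is also the step where I would expect to need care with repeated $b_j$. The last sentence of the theorem is then immediate. A noncollinear equality configuration must satisfy $f_m(b_1)=\cdots=f_m(b_n)$. Since $f_m$ is monic of degree $m$, the $b_j$, and hence the nonzero zeros $z_j=b_j^{-1}$, take at most $m$ distinct values.
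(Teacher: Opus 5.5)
Your proof of the sandwich and of \eqref{eq:variance} is the paper's own: your $S_m$ is its $L_m=D(KD)^{m-1}$, with the same induction for $S_mu$ and the same trace expansions. The forward half of criterion (1) is also fine. The gap is in the step you call the main obstacle.

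You test normality of $C^m$ through $[C^m,(C^m)^*]$, that is, through reality of $S_mK\bar S_m=Q^m\bar S_m$. That expression is quadratic in $Q^m$, and it does not factor as you expect. Already for $m=2$, a direct computation gives, for $i\ne j$,
\[
 \Ima\bigl((Q^2\bar S_2)_{ij}\bigr)=u_iu_j\Bigl[\bigl(|b_i|^2+|b_j|^2+S\bigr)\Ima(b_i\bar b_j)+\Ima\bigl(f_2(b_i)\overline{f_2(b_j)}\bigr)\Bigr],\qquad S=\sum_kr_k|b_k|^2 .
\]
This is a sum, not a product of $\Ima(\bar b_ib_j)$ with $f_m(b_i)-f_m(b_j)$. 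Through $s$ and $S$ it also depends on the whole configuration. For example, take $b=(1,2,i)$ with unit charges: the $(1,2)$ entry equals $2$, although $b_1,b_2$ are collinear. So no dichotomy can be read off entry by entry.

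The missing idea is the reduction the paper uses. For any square $X$, $X^m$ is normal if and only if $[X^m,X^*]=0$. Indeed, if $X^m$ is normal, then $(X^m)^*$ is a polynomial in $X^m$, so it commutes with $X$; now take adjoints. With $C^*$ in place of $(C^m)^*$, the commutator is linear in $Q^m$:
\[
 H^{-1}[C^m,C^*]H^{-1}=Q^mD^*-D^*(Q^m)^T .
\]
Your divided-difference formula then gives the entries $u_iu_j(b_i\bar b_j-\bar b_ib_j)f_m[b_i,b_j]$. That formula does follow by induction from $Q^{m+1}=Q^mQ$, using the identity $f_m(x)+a_m=x^m+\sum_kr_kb_kf_m[x,b_k]$, itself proved by induction on $m$; the paper uses a generating function instead.

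Two further steps are still needed after that.
\begin{enumerate}
\item Pairs on different lines through the origin give $f_m(b_i)=f_m(b_j)$, because there $b_i\ne b_j$. Pairs on the same line are not covered directly; they must be linked through an index on another line.
\item The converse of (1) is still unproved in your plan. It follows from the same entry formula. If $f_m$ is level, then $f_m[b_i,b_j]=0$ whenever $b_i\ne b_j$, while the phase factor vanishes when $b_i=b_j$. Hence $C^m=Q^m$ commutes with $C^*$, and therefore with $(C^m)^*$.
\end{enumerate}
Alternatively, argue as the paper does for the converse of (1). $Q^m$ commutes with $K$ and with $Q$, hence with $D=QK^{-1}$. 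It therefore commutes with $D^*$, which is a polynomial in $D$. So it commutes with $Q^*=KD^*$, and hence with $(Q^m)^*$.
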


For the quartic case, introduce
\begin{equation}\label{eq:moments}
 \begin{gathered}
 S=\sum_jr_j|b_j|^2,\qquad s=\sum_jr_jb_j,\qquad T=\sum_jr_jb_j^2,\\
 V_0=\sum_j|b_j|^4,\qquad V_1=\sum_jr_j|b_j|^4,\qquad
 W=\sum_jr_j|b_j^2+sb_j|^2.
 \end{gathered}
\end{equation}

\begin{corollary}\label{cor:quartic}
One has
\begin{align}\label{eq:quartics}
 \sum_j|w_j|^{-4}
 &\le V_0+2V_1+S^2+2W+|T+s^2|^2\notag\\
 &\le V_0+2V_1+S^2+(\sigma+2)W.
\end{align}
Equality in the first inequality holds if and only if the $z_j$ are collinear. Equality between the first and last expressions holds if and only if either all $b_j$ coincide, or they take exactly two distinct values $a,c$ satisfying
\begin{equation}\label{eq:twovalues}
 (1+R_a)a+(1+R_c)c=0,\qquad
 R_a=\sum_{b_j=a}r_j,\quad R_c=\sum_{b_j=c}r_j.
\end{equation}
\end{corollary}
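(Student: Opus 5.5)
The plan is to specialize Theorem~\ref{thm:normalpowers} to $m=2$. The corollary then reduces to three tasks: compute $\|C^2\|_F^2$, read off $\|Q^2\|_F^2$ from the variance identity \eqref{eq:variance}, and translate the normality criteria (1) and (2) into explicit conditions for a quadratic $f_2$.

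By \eqref{eq:recurrence}, $f_2(t)=t^2+st$ and
\[
a_2=\sum_j r_j(b_j^2+sb_j)=T+s^2 .
\]
Hence \eqref{eq:variance} reads
\[
\|Q^2\|_F^2-\|C^2\|_F^2=\sigma\sum_j r_j|b_j^2+sb_j|^2-|T+s^2|^2=\sigma W-|T+s^2|^2 .
\]
So once I establish
\[
\|C^2\|_F^2=V_0+2V_1+S^2+2W+|T+s^2|^2 ,
\]
both inequalities in \eqref{eq:quartics} follow from \eqref{eq:evensandwich}. Indeed, adding $\sigma W-|T+s^2|^2$ to this expression gives $V_0+2V_1+S^2+(\sigma+2)W$.

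For the computation I use that $H$ is real symmetric. This gives $C^2=HDKDH$ and $(C^2)^*=H\bar DK\bar DH$, and therefore
\[
\|C^2\|_F^2=\tr\bigl(DKDK\bar DK\bar DK\bigr).
\]
I then substitute $K=I+uu^T$ and expand into $16$ terms. Each factor $uu^T$ turns the trace into a product of scalar bilinear forms $u^TD^{\alpha}\bar D^{\beta}u$, which are sums $\sum_j r_jb_j^{\alpha}\bar b_j^{\beta}$ such as $s$, $S$, $T$. The bookkeeping goes as follows:
\begin{itemize}
\item the term with no $uu^T$ gives $V_0$;
\item the single-$uu^T$ terms give $4V_1$-type contributions;
\item the remaining terms must be grouped into $S^2$, the cross terms of $W=V_1+2\Rea(\bar s\sum_j r_j|b_j|^2b_j)+|s|^2S$, and $|T+s^2|^2$.
\end{itemize}
This grouping is the main obstacle: it is routine but error-prone, and the split of the $V_1$ contributions between $2V_1$ and the $V_1$ hidden in $2W$ must come out exactly right. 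I would check the final formula against $n=1$, where $\lambda_1=(1+r_1)b_1$.

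For equality in the first inequality, criterion (2) says equality holds iff the $b_j$ are collinear or $f_2$ is constant on them. I will show that the second alternative already forces collinearity. Since $f_2$ is quadratic, $f_2(a)=f_2(c)$ with $a\ne c$ means $a+c=-s$. So a non-trivial constant configuration takes exactly two values $a,c$, and then $s=R_aa+R_cc$. This gives $(1+R_a)a+(1+R_c)c=0$, so $a$ is a negative real multiple of $c$, which is collinear. Hence equality holds exactly for collinear $z_j$.

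For equality between the first and last expressions, criterion (1) says $Q^2$ is normal iff $f_2$ is constant on the $b_j$. By the argument just given, this means either all $b_j$ coincide or they take two values satisfying \eqref{eq:twovalues}. Conversely, \eqref{eq:twovalues} gives $a+c=-s$ and hence $f_2(a)=f_2(c)$, which completes the characterization.
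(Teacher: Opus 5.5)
Your proposal is correct and follows the paper's proof. Like the paper, you specialize Theorem~\ref{thm:normalpowers} to $m=2$ with $a_2=T+s^2$, and use that a level set of the monic quadratic $f_2$ has at most two points $a,c$ with $a+c+s=0$, i.e.\ $(1+R_a)a+(1+R_c)c=0$, which puts them on opposite rays.

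The only difference is how you evaluate $\|C^2\|_F^2$. Your 16-term expansion of $\tr(DKDK\bar DK\bar DK)$ does close up correctly: with $X=\sum_jr_j|b_j|^2b_j$, the terms with two, three and four copies of $uu^T$ give $S^2+|T|^2+4\Rea(\bar sX)$, $2|s|^2S+2\Rea(\bar s^{\,2}T)$ and $|s|^4$ respectively. The paper avoids this bookkeeping by applying Lemma~\ref{lem:symmetricnorm} to $L_2=DKD=D^2+vv^T$, where $\|L_2\|_F^2=V_0+2V_1+S^2$, $\|L_2u\|^2=W$ and $u^TL_2u=T+s^2$ can be read off directly.
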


For $r_j=1$, the last expression in \eqref{eq:quartics} is the bound in \cite[Theorem 7.1]{TZ}. Its equality cases, apart from coincident roots, are therefore
\[
 P(z)=z(z-\zeta)^h(z-\eta)^{n-h},\qquad
 (n-h+1)\zeta+(h+1)\eta=0,
\]
where $1\le h<n$ and $\zeta,\eta$ are nonzero. The opposite pair for $n=2$ was already noted in \cite[Remark 7.2]{TZ}; the displayed relation gives the complete classification.

\section{The symmetric companion and the sharp inequalities}

\subsection{Representation and the quadratic inequality}

\begin{lemma}\label{lem:representation}
The matrices $Q$ and $C$ in \eqref{eq:matrices} have eigenvalues $\lambda_1,\ldots,\lambda_n$. The matrix $C$ is normal if and only if the $b_j$ are collinear, and
\begin{equation}\label{eq:CF}
 \|C\|_F^2=\sum_j(1+2r_j)|b_j|^2+\abs{\sum_jr_jb_j}^2.
\end{equation}
\end{lemma}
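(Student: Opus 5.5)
We argue in three steps: the spectrum, the Frobenius norm, and the normality criterion.

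\emph{Spectrum.} Since $C=HQH^{-1}$, it suffices to treat $Q=DK$. We compute $\det(\lambda I-DK)$ using $K=I+uu^T$. For $\lambda\ne0$ write $\lambda I-DK=\lambda D(D^{-1}-\lambda^{-1}I-\lambda^{-1}uu^T)$. Note $D^{-1}-\lambda^{-1}I=\diag(z_j-\mu)/\mu\cdot\mu$ with $\mu=\lambda^{-1}$; more simply, $D^{-1}-\mu I=\diag(z_j-\mu)$. The matrix determinant lemma then gives
\[
\det(D^{-1}-\mu I-\mu uu^T)=\prod_j(z_j-\mu)\Bigl(1-\mu\sum_j\frac{r_j}{z_j-\mu}\Bigr).
\]
Multiplied out, this equals $(-1)^nF_r(\mu)$ as a polynomial identity in $\mu$, which also covers repeated $z_j$. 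Hence $\det(\lambda I-Q)=\lambda^n\det D\,(-1)^nF_r(1/\lambda)$ up to the sign conventions. The zeros in $\lambda$ are therefore exactly $\lambda_j=1/w_j$ with multiplicities, since $F_r$ has degree $n$ and $F_r(0)\ne0$.

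\emph{Frobenius norm.} Because $H$ is real symmetric, $\|C\|_F^2=\tr(C^*C)=\tr(H\bar DH^2DH)=\tr(\bar DKDK)$. With $K_{ij}=\delta_{ij}+\sqrt{r_ir_j}$, this is
\[
\sum_{i,j}\bar b_ib_j|K_{ij}|^2=\sum_{i,j}\bar b_i b_j\bigl(\delta_{ij}(1+2r_i)+r_ir_j\bigr),
\]
since $K_{ij}^2=\delta_{ij}(1+2r_i)+r_ir_j$. This is exactly \eqref{eq:CF}.

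\emph{Normality.} $C$ is normal iff $C^*C=CC^*$. Conjugating by $H$ turns this into the condition $\bar DKDK=KDK\bar D$, i.e.\ $[\bar DKD,K]=0$ after multiplying appropriately. Explicitly, $C^*C=H\bar DKDH$ and $CC^*=HDK\bar DH$, so normality is equivalent to $\bar DKD=DK\bar D$. Entrywise this reads $K_{ij}(\bar b_ib_j-b_i\bar b_j)=0$. For $i\ne j$ we have $K_{ij}=\sqrt{r_ir_j}>0$, so normality holds iff $\Ima(\bar b_ib_j)=0$ for all $i,j$. That is, all $b_j$ lie on a common line through the origin.

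The only real subtlety is the determinant step with repeated positions. It is handled by verifying the polynomial identity directly from \eqref{eq:Fr} rather than through the rational force function, and by using continuity or polynomiality in $\mu$ to cover $\lambda=0$ (which is not an eigenvalue, since $\det Q=\det D\det K\ne0$).
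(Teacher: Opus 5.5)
Your proof is correct and follows the paper's route: the rank-one determinant lemma gives $\det(\lambda I-Q)=\lambda^nF_r(1/\lambda)/F_r(0)$ (your sign bookkeeping is in fact exact, since $(-1)^n\det D=1/F_r(0)$), the norm is $\tr(\bar DKDK)$ expanded entrywise, and your condition $\bar DKD=DK\bar D$ is the paper's identity $[C,C^*]=H(vv^*-\bar vv^T)H$ with $v=Du$, written out in coordinates. The one error is your intermediate claim that normality is equivalent to $\bar DKDK=KDK\bar D$, or to $[\bar DKD,K]=0$; both are false (for $b=(1,2)$, $r=(1,1)$, $C$ is real symmetric, yet each condition fails because $(b_1-b_2)(KDK)_{12}=-6\neq0$), so delete that sentence and keep the explicit computation that follows it.
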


\begin{proof}
The rank-one determinant formula gives the polynomial identity
\begin{align*}
 \det(\lambda I-DK)
 &=\prod_j(\lambda-b_j)-\sum_jr_jb_j\prod_{\ell\ne j}(\lambda-b_\ell)\\
 &=\frac{\lambda^n F_r(1/\lambda)}{F_r(0)}.
\end{align*}
The expression on the right is monic and has precisely the required roots. This also proves \eqref{eq:product}. Since $C=HQH^{-1}$, it has the same spectrum.

Set $v=Du$. Direct multiplication gives
\begin{equation}\label{eq:commutator}
 [C,C^*]=H(vv^*-\bar v v^T)H.
\end{equation}
The $(i,j)$ entry of the middle matrix is
$\sqrt{r_ir_j}(b_i\bar b_j-\bar b_i b_j)$.
It vanishes for every pair exactly when the $b_j$ lie on one line through the origin. Finally,
$\|C\|_F^2=\tr(D^*KDK)$; expanding $K=I+uu^T$ gives \eqref{eq:CF}.
\end{proof}

Schur triangularization yields, for any square matrix $X$,
\begin{equation}\label{eq:schur}
 \sum_j|\lambda_j(X)|^2\le\|X\|_F^2,
\end{equation}
with equality exactly when $X$ is normal. Indeed, the difference is the sum of the squared moduli of the strictly upper triangular entries of a Schur form. Lemma~\ref{lem:representation} and \eqref{eq:schur} prove the weighted reciprocal Schoenberg inequality, and hence \eqref{eq:reciprocal}, with equality.

\subsection{Exterior powers}

Write $X^{[k]}=\bigwedge^k X$. In the standard basis indexed by $k$-element subsets, its $(I,J)$ entry is $\det X[I,J]$. The identities
\[
 (XY)^{[k]}=X^{[k]}Y^{[k]},\qquad
 (X^*)^{[k]}=(X^{[k]})^*
\]
follow from Cauchy--Binet. The eigenvalues of $X^{[k]}$ are the $k$-fold products of the eigenvalues of $X$; its singular values are the corresponding products of singular values. For positive definite $X$, functional calculus gives
$(X^a)^{[k]}=(X^{[k]})^a$ for real $a$.

We will use the following elementary equality observation.

\begin{lemma}\label{lem:compoundnormal}
If $X$ is invertible and $1\le k<n$, then $X^{[k]}$ is normal if and only if $X$ is normal.
\end{lemma}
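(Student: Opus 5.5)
The forward direction is immediate from the compound identities already recorded. If $XX^*=X^*X$, then
\[
X^{[k]}(X^{[k]})^*=(XX^*)^{[k]}=(X^*X)^{[k]}=(X^{[k]})^*X^{[k]},
\]
so $X^{[k]}$ is normal. Invertibility is not needed here.

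For the converse I would use the polar decomposition $X=U|X|$, with $U$ unitary and $|X|=(X^*X)^{1/2}$ positive definite; both exist because $X$ is invertible. Then $X$ is normal if and only if $XX^*=X^*X$, that is, $U|X|^2U^*=|X|^2$, which is equivalent to $U$ commuting with $|X|$. Compounding gives $X^{[k]}=U^{[k]}|X|^{[k]}$. Here $U^{[k]}$ is unitary, and $|X|^{[k]}=|X^{[k]}|$ by the functional-calculus remark. If $X^{[k]}$ is normal, the same reasoning applied to this polar decomposition shows that $U^{[k]}$ commutes with $|X|^{[k]}$. Passing to square roots shows that $U^{[k]}$ also commutes with $(|X|^2)^{[k]}=(X^*X)^{[k]}$. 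Equivalently, $(XX^*)^{[k]}=(X^*X)^{[k]}$, with both sides positive definite.

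The remaining step is to show that $A^{[k]}=B^{[k]}$ forces $A=B$ when $A,B$ are positive definite and $1\le k<n$. For $k=1$ this is trivial, so assume $2\le k<n$. First, $A$ and $B$ have the same eigenvalues. Indeed, the $k$-fold products of $\alpha_1\ge\cdots\ge\alpha_n>0$ determine the $\alpha_i$: the largest $(k-1)$-fold and $k$-fold products together give $\alpha_k$, and positivity with $k<n$ lets one recover everything, since ratios of $k$-products sharing $k-1$ indices give all ratios $\alpha_i/\alpha_j$, and any one product then fixes the scale. For the eigenvectors, note that every $k$-subspace spanned by eigenvectors of $A$ corresponds to a decomposable eigenvector of $A^{[k]}$. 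Equality of the compounds means these decomposable eigenvectors coincide for $A$ and $B$.

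I expect this eigenspace step to be the main obstacle, because it is delicate when eigenvalues repeat. A cleaner route avoids it. Put $M=A^{-1/2}BA^{-1/2}$, which is positive definite. Then $M^{[k]}=I$, since
\[
M^{[k]}=(A^{-1/2})^{[k]}B^{[k]}(A^{-1/2})^{[k]}=(A^{[k]})^{-1/2}A^{[k]}(A^{[k]})^{-1/2}=I .
\]
So all $k$-fold products of the eigenvalues $\mu_i>0$ of $M$ equal $1$. Because $k<n$, swapping one index in a $k$-subset shows $\mu_i=\mu_j$ for all $i,j$. Then $\mu^k=1$ with $\mu>0$ gives $\mu=1$. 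Hence $M=I$ and $B=A$. Applying this with $A=X^*X$ and $B=XX^*$ gives $XX^*=X^*X$, so $X$ is normal.

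This argument uses invertibility only to make $A$ positive definite. The hypothesis $k<n$ is exactly what allows the index swap; for $k=n$ the conclusion fails, since $X^{[n]}=\det X$ is a $1\times1$ matrix and is always normal.
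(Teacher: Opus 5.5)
Your final argument is the paper's proof: congruence by $A^{-1/2}$ gives $M^{[k]}=I$, and the one-index swap, which uses $k<n$, shows that $P\mapsto P^{[k]}$ is injective on positive definite matrices; this is then applied to $XX^*$ and $X^*X$. The polar-decomposition detour is unnecessary, since $X^{[k]}(X^{[k]})^*=(XX^*)^{[k]}$ and $(X^{[k]})^*X^{[k]}=(X^*X)^{[k]}$ already give the reduction, as in your forward direction. Your abandoned first route should be dropped rather than repaired: the multiset of $k$-fold products does not determine the factors. In logarithms, $\{0,2,5,5\}$ and $\{1,1,4,6\}$ have the same pairwise sums, so the eigenvalue recovery claimed there is unjustified.
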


\begin{proof}
It suffices to show that $P\mapsto P^{[k]}$ is injective on positive definite matrices. If $P^{[k]}=R^{[k]}$, then
$(R^{-1/2}PR^{-1/2})^{[k]}=I$.
Every $k$-fold product of the positive eigenvalues of $R^{-1/2}PR^{-1/2}$ is therefore $1$. Comparing products that differ in one index shows that all the eigenvalues are equal; hence each is $1$ and $P=R$. Apply this to $P=XX^*$ and $R=X^*X$.
\end{proof}

\begin{proof}[Proof of Theorem~\ref{thm:exterior}]
Apply \eqref{eq:schur} to $C^{[k]}$. Its squared Frobenius norm is
\begin{equation}\label{eq:compoundtrace}
 \|(C)^{[k]}\|_F^2
 =\tr\bigl((D^{[k]})^*K^{[k]}D^{[k]}K^{[k]}\bigr)
 =\sum_{|I|=|J|=k}\bar b_I b_J\det K[I,J]^2.
\end{equation}
The minors of $K=I+uu^T$ have a particularly simple form:
\[
 \det K[I,I]=1+r_I,\qquad
 \det K[A\cup\{i\},A\cup\{j\}]^2=r_ir_j\quad(i\ne j).
\]
Every other minor is zero, since two or more rows outside the common index set are proportional. Grouping the off-diagonal terms in \eqref{eq:compoundtrace} by $A=I\cap J$ gives exactly \eqref{eq:Bk}. Equality follows from Lemmas~\ref{lem:representation} and \ref{lem:compoundnormal}; the top exterior power is scalar.

For the refinement identity, use the weighted variance formula
\begin{equation}\label{eq:weightedvariance}
 \left(\sum_{j\in J}r_j\right)\sum_{j\in J}r_j|b_j|^2
 -\abs{\sum_{j\in J}r_jb_j}^2
 =\sum_{\substack{i<j\\i,j\in J}}r_ir_j|b_i-b_j|^2
\end{equation}
on each complement $J=A^c$. After multiplication by $|b_A|^2$, the coefficient contributed to $|b_I|^2$ is
$\sum_{i\in I}r_i(\sigma-r_I+r_i)$.
Combining this with the first sum in \eqref{eq:Bk} gives
$1+(\sigma+2)r_I$, which proves \eqref{eq:exteriordeficit}.
\end{proof}

\subsection{Removing the phases}

We recall two standard matrix inequalities in the precise forms needed here. If $X$ is invertible and $s_j(X)$ are its singular values, Weyl's multiplicative inequality gives
\begin{equation}\label{eq:weyl}
 \sum_j|\lambda_j(X)|^p\le\sum_js_j(X)^p,\qquad p>0.
\end{equation}
Equality holds if and only if $X$ is normal. To see the equality assertion, apply strict convexity of $t\mapsto e^{pt}$ to the majorization of logarithms of eigenvalue moduli by logarithms of singular values. Equality identifies the two multisets; \eqref{eq:schur} then implies normality. See \cite{Bhatia} for multiplicative majorization.

For positive definite $X,Y$, the Araki--Lieb--Thirring inequality is
\begin{equation}\label{eq:ALT}
 \tr\bigl((X^{1/2}YX^{1/2})^p\bigr)\le\tr(X^pY^p),\qquad p\ge1.
\end{equation}
For $p>1$, equality holds if and only if $XY=YX$. We use Araki's stronger log-majorization theorem \cite{Araki}; a short justification of the equality condition is useful. For fixed $p>1$, that theorem makes
\[
 g(t)=\tr\bigl((X^{t/2}Y^tX^{t/2})^{p/t}\bigr)
\]
nondecreasing on $t>0$. The function is real analytic there. Equality in \eqref{eq:ALT} says $g(1)=g(p)$, so $g$ is constant first on $[1,p]$ and then on $(0,\infty)$. In particular,
\[
 0=g(p)-g(p/2)=\tfrac12\|[X^{p/2},Y^{p/2}]\|_F^2.
\]
Functional calculus now gives $XY=YX$. This argument uses only the log-majorization theorem; see also Hiai \cite{Hiai} for a careful treatment of equality in related norm inequalities.

\begin{lemma}\label{lem:phases}
Put $R=\diag(|b_1|,\ldots,|b_n|)$ and $A=R^{1/2}KR^{1/2}$. For all $k$ and $p>0$,
\begin{equation}\label{eq:phasecomparison}
 \E_{k,p}(w)\le\tr\bigl((A^{[k]})^p\bigr).
\end{equation}
For $k<n$, equality holds if and only if all $b_j$ have the same argument.
\end{lemma}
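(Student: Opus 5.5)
Write $D=UR$ with $U=\diag(b_j/|b_j|)$ unitary and $R$ as in the statement. Then $Q=DK=URK$ is similar to
\[
 X=R^{1/2}\,U\,R^{1/2}K,
\]
because $U$ and $R^{1/2}$ commute and $X=R^{-1/2}(URK)R^{1/2}$. Hence $X$ has eigenvalues $\lambda_1,\ldots,\lambda_n$. By Cauchy--Binet and the multiplicativity of compounds, $X^{[k]}$ has eigenvalues $\lambda_I=\prod_{j\in I}\lambda_j$ over $|I|=k$. The left side of \eqref{eq:phasecomparison} is therefore $\sum_{|I|=k}|\lambda_I(X)|^p$, with $X$ now replaced by $X^{[k]}$.

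Next we apply Weyl's inequality \eqref{eq:weyl} to the invertible matrix $X^{[k]}$:
\[
 \E_{k,p}(w)\le\sum s_j(X^{[k]})^p.
\]
It remains to identify the singular values of $X^{[k]}$ with the eigenvalues of $A^{[k]}$. Since $R^{1/2}K$ need not be nicer than $X$, we instead use the further similarity $Y=K^{1/2}XK^{-1/2}=K^{1/2}R^{1/2}UR^{1/2}K^{1/2}$. Put $G=R^{1/2}K^{1/2}$, so that $Y=G^*UG$. Then
\[
 Y^*Y=G^*U^*GG^*UG,\qquad GG^*=R^{1/2}KR^{1/2}=A.
\]
That alone does not give the singular values, so instead we write $Y=G^*UG$ and factor through a polar decomposition $G=WA^{1/2}$; a variant is $G=A^{1/2}W$ with $W$ unitary, since $GG^*=A$. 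Then
\[
 Y=W^*A^{1/2}UA^{1/2}W,
\]
which is unitarily similar to $Z=A^{1/2}UA^{1/2}$. This $Z$ has the same eigenvalues $\lambda_j$ and the same singular values as $Y$, and hence as any matrix unitarily similar to it. The singular values of $Z$ are bounded via the submultiplicativity of singular-value products, which is Horn's inequality: $\prod_{j\le m}s_j(A^{1/2}UA^{1/2})\le\prod_{j\le m}s_j(A)$. This inequality transfers to compounds, giving log-majorization of $s(Z^{[k]})$ by the eigenvalues of $A^{[k]}$. Since $t\mapsto e^{pt}$ is convex and increasing, the eigenvalue moduli of $Z^{[k]}$, which are log-majorized by its singular values, yield
\[
 \sum_{|I|}|\lambda_I|^p\le\sum s(Z^{[k]})^p\le\tr\bigl((A^{[k]})^p\bigr).
\]
Equivalently, one can skip the singular values: the eigenvalue moduli of $Z^{[k]}$ are weakly log-majorized by $s(Z^{[k]})$, which in turn are weakly log-majorized by $\lambda(A^{[k]})$, and this works for all $p>0$.

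For equality with $k<n$, the log-majorization of $|\lambda|$ by $\lambda(A^{[k]})$ must become an identity of multisets. In particular the products must agree at the top: $\prod|\lambda_I|=\det A^{[k]}$ holds automatically by \eqref{eq:product}. So the finer information must come from the strict convexity of $e^{pt}$, which forces $|\lambda_I|=\lambda_I(A^{[k]})$ as multisets. Then $\sum|\lambda_I(Z^{[k]})|^2=\tr (A^{[k]})^2\ge\|Z^{[k]}\|_F^2$, where the inequality comes from the singular-value bound with $p=2$. By \eqref{eq:schur}, $Z^{[k]}$ must then be normal, and by Lemma~\ref{lem:compoundnormal} so is $Z$. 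Its singular values then equal the eigenvalues of $A$, so $\|Z\|_F=\|A\|_F$. Expanding, $\tr(A U^*AU)=\tr(A^2)$ means $\sum_{i,j}|A_{ij}|^2\,\overline{u_i}u_j=\sum|A_{ij}|^2$. Since every $A_{ij}=\sqrt{r_ir_j|b_ib_j|}\ne0$ for $i\ne j$, this forces all the phases $u_j$ to be equal. Conversely, equal phases give $Z=uA$ with $|u|=1$, which is normal with $|\lambda|=\lambda(A)$, so equality holds.

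The main obstacle is the equality step: the passage from a multiset identity to the normality of $Z$ in the sublinear case $p<1$, where convexity is still strict but the route through $p=2$ needs the full multiset identity. I would make that route rigorous via strict convexity of $e^{pt}$ applied at each stage of the chain of weak log-majorizations.
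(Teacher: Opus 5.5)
Your inequality argument is correct, but it takes a detour the paper avoids.

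\textbf{Comparison of routes.} Your $Z=A^{1/2}UA^{1/2}$ is orthogonally similar to $C$. Its singular values are therefore not $\lambda(A)$, so you need Horn's inequality on top of Weyl's. The paper instead splits the phases symmetrically. With $V^2=U$, the matrix $M=D^{1/2}KD^{1/2}=VAV$ is similar to $Q$ and satisfies $MM^*=VA^2V^*$. Hence $s(M^{[k]})=\lambda(A^{[k]})$ exactly, and Weyl alone gives \eqref{eq:phasecomparison}. Equality is then just normality of $M$, i.e.\ $UA^2=A^2U$, which forces $U$ to be scalar. There are two harmless slips in your write-up: your $X$ is literally $Q$, not $R^{-1/2}QR^{1/2}$, and the polar form you actually use is $G=A^{1/2}W$.

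\textbf{The gap.} It lies in the equality step for $2\le k<n$, at ``Its singular values then equal the eigenvalues of $A$.''
\begin{itemize}
\item Normality of $Z$ cannot give this. Like $C$, the matrix $Z$ is normal exactly when the $b_j$ are collinear (Lemma~\ref{lem:representation}). For $b=(1,-1)$ and $r=(1,1)$, $Z$ is normal but $\|Z\|_F^2=6<10=\|A\|_F^2$.
\item So the claim would have to come from the level-$k$ identity $\{|\lambda_I|\}_{|I|=k}=\{\lambda_I(A)\}_{|I|=k}$. You give no reason why this descends to level $1$. In general, multisets of $k$-fold products do not determine the underlying multiset: the pairwise sums of $\{1,4,5,6\}$ and $\{2,3,4,7\}$ coincide.
\end{itemize}

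\textbf{The repair.} Stay at level $k$. Your Schur step already yields $\|Z^{[k]}\|_F^2=\tr\bigl((A^{[k]})^2\bigr)$. Writing $U=\diag(\epsilon_j)$ and $\epsilon_I=\prod_{j\in I}\epsilon_j$, you have
\[
 \|Z^{[k]}\|_F^2=\sum_{I,J}\bigl(A^{[k]}_{IJ}\bigr)^2\,\overline{\epsilon_I}\,\epsilon_J .
\]
By the minors of $K$ computed in the proof of Theorem~\ref{thm:exterior}, $A^{[k]}_{IJ}\ne0$ whenever $I=S\cup\{i\}$ and $J=S\cup\{j\}$. This forces $\epsilon_i=\epsilon_j$ for all $i,j$.

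Finally, the place you flagged, $p<1$, is not a problem. Since $|\det Z|=\det A$, the log-majorization is full, and $t\mapsto e^{pt}$ is strictly convex for every $p>0$.
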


\begin{proof}
Write $D=UR$, with $U$ diagonal unitary, and choose a diagonal unitary $V$ with $V^2=U$. The matrix
$M=D^{1/2}KD^{1/2}=VAV$
is similar to $Q$. Its singular values are the eigenvalues of $A$. Apply \eqref{eq:weyl} to $M^{[k]}$.

For $k<n$, equality is equivalent to normality of $M$, by Lemma~\ref{lem:compoundnormal}. Since
$MM^*=VA^2V^*$ and $M^*M=V^*A^2V$,
this is equivalent to $UA^2=A^2U$, and hence to $UA=AU$. Every off-diagonal entry of $A$ is positive. Therefore $U$ is scalar, as required.
\end{proof}

\begin{proof}[Proof of Theorem~\ref{thm:allpowers} and Corollary~\ref{cor:constants}]
For $p\ge1$, apply \eqref{eq:ALT} to $R^{[k]}$ and $K^{[k]}$ in \eqref{eq:phasecomparison}. Since
\[
 K^p=I+\frac{(1+\sigma)^p-1}{\sigma}uu^T,
\]
we obtain
\[
 \E_{k,p}(w)\le
 \tr\bigl((R^p)^{[k]}(K^p)^{[k]}\bigr)
 =\sum_{|I|=k}|b_I|^p
       \left(1+\frac{(1+\sigma)^p-1}{\sigma}r_I\right).
\]
For $k<n$ and $p>1$, equality also requires $R^{[k]}K^{[k]}=K^{[k]}R^{[k]}$. Neighboring index sets $A\cup\{i\}$ and $A\cup\{j\}$ have a nonzero connecting entry in $K^{[k]}$, so $|b_i|=|b_j|$ for every pair. Lemma~\ref{lem:phases} supplies the common argument. At $p=1$, only the latter condition remains.

For $0<p<1$, the diagonal of a positive matrix is majorized by its eigenvalues. Concavity therefore gives
\[
 \tr\bigl((A^{[k]})^p\bigr)
 \le\sum_{|I|=k}\bigl(\det A[I,I]\bigr)^p
 =\sum_{|I|=k}(1+r_I)^p|b_I|^p.
\]
For completeness, diagonalize $A^{[k]}$ unitarily and apply scalar Jensen to each diagonal entry. Strict concavity shows that equality forces $A^{[k]}$ to be diagonal. When $k<n$, its entries joining neighboring index sets are nonzero, so the inequality is strict.

To prove that the coefficient of $|b_I|^p$ cannot be reduced, take $b_j=1$ for $j\in I$ and $b_j=\varepsilon>0$ otherwise, and let $\varepsilon\downarrow0$. The nonzero eigenvalues in the limit are $1+r_I$ and $k-1$ copies of $1$. Thus the left-hand side tends to $(1+r_I)^p$, while every other monomial on the right tends to zero.

For equal charges, the coefficients depend only on $k$, giving \eqref{eq:constants}. The preceding limiting configuration proves sharpness for $p<1$. For $p\ge1$, take all $b_j$ equal: the eigenvalues are $(1+n\rho)b_1$ and $n-1$ copies of $b_1$, and
\[
 \frac{\binom{n-1}{k}+\binom{n-1}{k-1}(1+n\rho)^p}{\binom nk}
 =1+\frac{k}{n}\bigl((1+n\rho)^p-1\bigr).
\]
The case $k=n$ is \eqref{eq:product}.
\end{proof}

\begin{remark}[Comparison with product majorization]\label{rem:comparison}
Borcea's Theorem 3 \cite{Borcea}, applied to products of $n-k$ equilibrium points and the convex function $|\cdot|^p$, yields, after inversion using \eqref{eq:product},
\begin{equation}\label{eq:borcea}
 \E_{k,p}(w)\le(1+\sigma)^{p-1}
       \sum_{|I|=k}(1+r_I)|b_I|^p,\qquad p\ge1.
\end{equation}
Indeed, normalize the charges to total mass one. The coefficient of a product indexed by $J\subseteq\{1,\ldots,n\}$ in the forward inequality is
$1-\sum_{j\in J}r_j/(1+\sigma)$; products containing the distinguished zero vanish. Set $J=I^c$ and divide by the total product of the equilibrium points. This gives \eqref{eq:borcea}. For $k=n$ the product identity suffices.

At $p=1$, \eqref{eq:borcea} and \eqref{eq:weightedpowers} coincide. For $p>1$ and $k<n$, the coefficient in \eqref{eq:borcea} exceeds ours by
\[
 \bigl((1+\sigma)^{p-1}-1\bigr)
       \left(1-\frac{r_I}{\sigma}\right)>0.
\]
Thus the endpoint is already a consequence of product majorization, while the new coefficients give a strict improvement away from that endpoint.
\end{remark}

\section{Normal powers and equality in higher moments}

\subsection{A variance identity}

The next identity explains why the positive similarity improves every even-moment bound, not just the quadratic one.

\begin{lemma}\label{lem:symmetricnorm}
If $L^T=L$, then
\begin{align}
 \|HLH\|_F^2
 &=\|L\|_F^2+2\|Lu\|^2+|u^TLu|^2,\label{eq:HLH}\\
 \|LK\|_F^2
 &=\|L\|_F^2+(\sigma+2)\|Lu\|^2.\label{eq:LK}
\end{align}
\end{lemma}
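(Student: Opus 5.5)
We expand both norms as traces, using $K=I+uu^T$ and $H^2=K$. Everything is real symmetric apart from $L$, and $L^T=L$ gives $L^*=\bar L$. Throughout, write $\|u\|^2=\sigma$ and $\tau=u^TLu$.

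For \eqref{eq:LK}, compute
\[
 \|LK\|_F^2=\tr\bigl(K L^*LK\bigr)=\tr\bigl(L^*LK^2\bigr).
\]
Here $K^2=I+(\sigma+2)uu^T$, because $(uu^T)^2=\sigma\,uu^T$. Therefore
\[
 \tr(L^*LK^2)=\|L\|_F^2+(\sigma+2)\,u^TL^*Lu=\|L\|_F^2+(\sigma+2)\|Lu\|^2 .
\]
Since $u$ is real, $u^T L^* L u=(Lu)^*(Lu)$. This step is immediate.

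For \eqref{eq:HLH}, first use cyclicity:
\[
 \|HLH\|_F^2=\tr\bigl(HL^*H^2LH\bigr)=\tr\bigl(L^*KLK\bigr).
\]
This is the same reduction as in the proof of \eqref{eq:CF}. Expanding $K=I+uu^T$ in both places gives four terms:
\[
 \tr(L^*L)+\tr(L^*uu^TL)+\tr(L^*Luu^T)+\tr(L^*uu^TLuu^T).
\]
The second term is $u^TLL^*u=\|L^*u\|^2$. Since $L^*u=\overline{Lu}$, because $u$ is real and $L^*=\bar L$, this equals $\|Lu\|^2$. The third term is $u^TL^*Lu=\|Lu\|^2$. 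The fourth term is
\[
 (u^TL^*u)(u^TLu)=\bar\tau\,\tau=|\tau|^2 .
\]
Here $u^TL^*u=\overline{u^TLu}$, again using that $u$ is real. Adding the four terms gives \eqref{eq:HLH}.

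The only delicate point is the second term. There the symmetry hypothesis $L^T=L$ is genuinely needed to identify $\|L^*u\|$ with $\|Lu\|$; for nonsymmetric $L$ the formula fails. Everything else is routine trace bookkeeping. In applications one takes $L=D^m$-type symmetric matrices, or more generally $L$ with $C^m=HLH$, to compare $\|C^m\|_F$ with $\|Q^m\|_F$.
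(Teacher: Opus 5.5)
Your proof is correct and is essentially the paper's argument: you reduce to $\tr(L^*KLK)$ and $\tr(L^*LK^2)$, expand with $K=I+uu^T$ and $K^2=I+(\sigma+2)uu^T$, and use $L^T=L$ and the reality of $u$ to identify the cross term $u^TLL^*u$ with $\|Lu\|^2$. Only your closing aside is loose, since the matrices used later are $L_m=D(KD)^{m-1}$ rather than $D^m$; this does not affect the proof.
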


\begin{proof}
Expand $\tr(L^*KLK)$ using $K=I+uu^T$ to obtain \eqref{eq:HLH}. The two single-rank terms have the same trace because $L$ is symmetric and $u$ is real. For \eqref{eq:LK}, use
$K^2=I+(\sigma+2)uu^T$ in $\tr(L^*LK^2)$.
\end{proof}

Put
\[
 L_m=D(KD)^{m-1}.
\]
This matrix is symmetric, $C^m=HL_mH$, and $Q^m=L_mK$. Moreover,
\begin{equation}\label{eq:Lmu}
 L_mu=f_m(D)u.
\end{equation}
For $m=1$ this is immediate. The induction step follows from
$L_{m+1}=DKL_m$:
\[
 L_{m+1}u=D\bigl(f_m(D)u+a_mu\bigr)=f_{m+1}(D)u.
\]
Lemma~\ref{lem:symmetricnorm} therefore proves \eqref{eq:variance}. Its right-hand side is nonnegative by Cauchy--Schwarz, with equality precisely when all $f_m(b_j)$ coincide. Together with Schur's inequality, this proves the sandwich \eqref{eq:evensandwich}.

Suppose now that $f_m(b_j)=a$ for every $j$. Then $L_mu=au$ and, by symmetry, $u^TL_m=au^T$, so $L_m$ commutes with $K$. Consequently
\[
 Q^m=L_mK=HL_mH=C^m
\]
commutes with both $K$ and $Q$, hence with $D=QK^{-1}$. Since $D^*$ is a polynomial in $D$ (interpolate conjugation on its finite set of distinct diagonal entries), $Q^m$ commutes with $D^*$ and therefore with $Q^*=KD^*$. In particular, $Q^m$ is normal. Conversely, normality of $Q^m$ gives equality between the endpoints of \eqref{eq:evensandwich}, so \eqref{eq:variance} vanishes. This proves part (1) of Theorem~\ref{thm:normalpowers}.

\subsection{Divided differences}

For a polynomial $f$, write
\[
 f[x,y]=
 \begin{cases}
 \dfrac{f(x)-f(y)}{x-y},&x\ne y,\\
 f'(x),&x=y.
 \end{cases}
\]

\begin{lemma}\label{lem:divided}
For $m\ge1$,
\begin{equation}\label{eq:entries}
 (Q^m)_{ij}=\delta_{ij}b_i^m+b_iu_iu_j f_m[b_i,b_j].
\end{equation}
Furthermore,
\begin{equation}\label{eq:powercommutator}
 [C^m,C^*]=H\Omega_mH,\qquad
 (\Omega_m)_{ij}
 =u_iu_j(b_i\bar b_j-\bar b_i b_j)f_m[b_i,b_j].
\end{equation}
\end{lemma}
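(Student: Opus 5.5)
The plan is to prove \eqref{eq:entries} by induction on $m$, and then to read off \eqref{eq:powercommutator} from the factorization $C^m=HL_mH$ together with the transpose relation between $Q^m$ and $KL_m$.

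\emph{Step 1: the entries of $Q^m$.} For $m=1$ we have $Q=D(I+uu^T)$, so $Q_{ij}=\delta_{ij}b_i+b_iu_iu_j$. This agrees with \eqref{eq:entries} because $f_1[x,y]=1$.

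For the induction step, write $Q^{m+1}=DKQ^m$. We need the row vector $u^TQ^m=u^TL_mK$. Since $L_m$ is symmetric, \eqref{eq:Lmu} gives $u^TL_m=u^Tf_m(D)$. Hence
\[
u^TL_mK=u^Tf_m(D)+(u^Tf_m(D)u)\,u^T=u^Tf_m(D)+a_mu^T,
\]
that is, $(u^TQ^m)_j=u_j\bigl(f_m(b_j)+a_m\bigr)$. Substituting this and the induction hypothesis into $(KQ^m)_{ij}=(Q^m)_{ij}+u_i(u^TQ^m)_j$, and then multiplying by $b_i$, gives
\[
(Q^{m+1})_{ij}=\delta_{ij}b_i^{m+1}+b_iu_iu_j\bigl(b_if_m[b_i,b_j]+f_m(b_j)+a_m\bigr).
\]
It remains to check the divided-difference identity
\[
f_{m+1}[x,y]=x\,f_m[x,y]+f_m(y)+a_m .
\]
For $x\ne y$ this follows directly from $f_{m+1}(t)=tf_m(t)+a_mt$:
\[
\frac{xf_m(x)-yf_m(y)}{x-y}=x\,f_m[x,y]+f_m(y),\qquad \frac{a_mx-a_my}{x-y}=a_m .
\]
Both sides are polynomials in $(x,y)$, so the identity also holds on the diagonal $x=y$, where $f[x,x]=f'(x)$. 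This covers repeated $b_j$, and completes the induction.

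\emph{Step 2: the commutator.} Since $H$ is real symmetric, $C^*=HD^*H$. Using $C^m=HL_mH$ and $H^2=K$,
\[
[C^m,C^*]=H\bigl(L_mKD^*-D^*KL_m\bigr)H,
\]
so $\Omega_m=Q^mD^*-D^*KL_m$. Because $L_m$ and $K$ are symmetric, $KL_m=(L_mK)^T=(Q^m)^T$. Therefore
\[
(\Omega_m)_{ij}=(Q^m)_{ij}\,\bar b_j-\bar b_i\,(Q^m)_{ji}.
\]
We now insert \eqref{eq:entries}. The diagonal terms $\delta_{ij}b_i^m$ cancel, since $b_i^m\bar b_i-\bar b_ib_i^m=0$. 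The divided difference is symmetric, $f_m[b_i,b_j]=f_m[b_j,b_i]$, so the remaining terms combine to
\[
u_iu_j\,f_m[b_i,b_j]\,\bigl(b_i\bar b_j-\bar b_ib_j\bigr),
\]
which is \eqref{eq:powercommutator}.

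The only delicate point is the confluent case of coincident $b_i=b_j$ in Step 1. It is handled by the polynomial identity for divided differences rather than by the quotient formula, so no separate argument is needed.
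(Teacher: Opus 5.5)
Your proof is correct. For the commutator formula \eqref{eq:powercommutator} it follows the paper's argument exactly: the identity $H^{-1}[C^m,C^*]H^{-1}=L_mKD^*-D^*KL_m$, then $L_mK=Q^m$ and $KL_m=(Q^m)^T$, cancellation of the diagonal terms, and symmetry of $f_m[\cdot,\cdot]$.

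For the entry formula \eqref{eq:entries} you take a different route. The paper treats all powers at once. It introduces $\Psi(t)=1-\sum_j r_jb_jt/(1-b_jt)$ and derives the generating series $\sum_{m\ge1}f_m(x)t^{m-1}=x/((1-xt)\Psi(t))$ from the recurrence. It then writes the resolvent $(I-tQ)^{-1}$ via the rank-one inverse formula and compares coefficients after taking the divided difference in $x$, using that the divided difference of $x/(1-xt)$ is $1/((1-xt)(1-yt))$.

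You instead induct on $m$ through $Q^{m+1}=DKQ^m$. The identity $L_mu=f_m(D)u$, already proved before the lemma, together with the symmetry of $L_m$, gives the row vector $u^TQ^m=u^Tf_m(D)+a_mu^T$. You then close the step with the divided-difference identity $f_{m+1}[x,y]=xf_m[x,y]+f_m(y)+a_m$.

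Your version is more elementary and self-contained: it needs no formal power series and no Sherman--Morrison formula. The confluent case $b_i=b_j$ is settled by a polynomial identity; one can also check it directly by differentiating $f_{m+1}(t)=tf_m(t)+a_mt$.

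The paper's route costs more setup but yields a closed formula for the whole resolvent. It also explains where the recurrence comes from. The $f_m$ are the Taylor coefficients of $x/((1-xt)\Psi(t))$, and $\Psi(t)=\det(I-tQ)/\prod_j(1-b_jt)$. This ties the polynomials $f_m$ directly to the characteristic polynomial of $Q$.
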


\begin{proof}
All generating series below may be read formally. Set
\[
 \Psi(t)=1-\sum_j\frac{r_jb_jt}{1-b_jt}.
\]
The recurrence \eqref{eq:recurrence} gives
\begin{equation}\label{eq:fseries}
 \sum_{m\ge1}f_m(x)t^{m-1}
 =\frac{x}{(1-xt)\Psi(t)}.
\end{equation}
Indeed, summing the recurrence first gives
$x(1+t\sum_{m\ge1}a_mt^{m-1})/(1-xt)$; summing this at $x=b_j$ with weights $r_j$ identifies the parenthesis as $1/\Psi(t)$.

The rank-one inverse formula gives
\[
 ((I-tQ)^{-1})_{ij}
 =\frac{\delta_{ij}}{1-b_it}
   +\frac{t b_i u_i u_j}
   {(1-b_it)(1-b_jt)\Psi(t)}.
\]
Taking the divided difference in $x$ of \eqref{eq:fseries} and comparing coefficients proves \eqref{eq:entries}, also when $b_i=b_j$.

Finally,
\[
 H^{-1}[C^m,C^*]H^{-1}=L_mKD^*-D^*KL_m.
\]
Use $L_mK=Q^m$, $KL_m=(Q^m)^T$, and \eqref{eq:entries}. The diagonal terms cancel, leaving \eqref{eq:powercommutator}.
\end{proof}

\begin{proof}[Completion of Theorem~\ref{thm:normalpowers}]
For any matrix $X$, normality of $X^m$ is equivalent to
\begin{equation}\label{eq:finitefuglede}
 [X^m,X^*]=0.
\end{equation}
One implication follows by commuting with $(X^*)^m$. For the other, if $X^m$ is normal, its adjoint is a polynomial in $X^m$, by the spectral theorem and interpolation on the finite spectrum. Since $X$ commutes with $X^m$, it commutes with $(X^m)^*$; taking adjoints gives \eqref{eq:finitefuglede}.

If the $b_j$ are collinear, $C$ itself is normal. Otherwise, partition the indices according to the lines through the origin containing $b_j$. There are at least two classes. For indices in different classes,
$b_i\bar b_j-\bar b_i b_j\ne0$ and $b_i\ne b_j$.
Thus \eqref{eq:powercommutator} and \eqref{eq:finitefuglede} force
$f_m(b_i)=f_m(b_j)$.
Any two indices in the same class can be joined through an index in another class, so all $f_m(b_j)$ coincide. The converse follows from part (1) and $C^m=Q^m$ when the variance vanishes. Finally, $f_m$ is monic of degree $m$, so a level set of $f_m$ contains at most $m$ distinct points.
\end{proof}

\begin{remark}[Polynomial functions]\label{rem:functional}
The same argument applies to an arbitrary polynomial $h(t)=\sum_{m=0}^d c_mt^m$. Put
$f_h=\sum_{m=1}^d c_mf_m$. Then
\begin{align*}
 \sum_j|h(\lambda_j)|^2&\le\|h(C)\|_F^2\le\|h(Q)\|_F^2,\\
 \|h(Q)\|_F^2-\|h(C)\|_F^2
 &=\sigma\sum_jr_j|f_h(b_j)|^2
       -\abs{\sum_jr_jf_h(b_j)}^2.
\end{align*}
The matrix $h(Q)$ is normal exactly when the $f_h(b_j)$ coincide; $h(C)$ is normal exactly when this holds or the $b_j$ are collinear. To include the constant term in the norm calculation, use
$L=c_0K^{-1}+\sum_{m\ge1}c_mL_m$ in Lemma~\ref{lem:symmetricnorm}. The vector $Lu$ differs from $f_h(D)u$ by a constant multiple of $u$, which does not change the variance. The commutator formula follows by linearity, and the finite-spectrum argument above applies with $h(X)$ in place of $X^m$.
\end{remark}

\subsection{The quartic bound and higher-order examples}

\begin{proof}[Proof of Corollary~\ref{cor:quartic}]
With $v=Du$, one has $L_2=D^2+vv^T$. Directly,
\[
 \|L_2\|_F^2=V_0+2V_1+S^2,\qquad
 \|L_2u\|^2=W,\qquad u^TL_2u=T+s^2.
\]
Lemma~\ref{lem:symmetricnorm} gives both right-hand sides of \eqref{eq:quartics}.

It remains to identify equality. If $f_2(b_j)$ is constant, the monic quadratic $f_2(t)=t^2+st$ takes that value at at most two distinct points. For two such points $a,c$,
\[
 0=\frac{f_2(a)-f_2(c)}{a-c}=a+c+s
   =(1+R_a)a+(1+R_c)c.
\]
Thus $a$ and $c$ lie on opposite rays. The alternative in part (2) of Theorem~\ref{thm:normalpowers} therefore adds no noncollinear configurations when $m=2$. Part (1) gives exactly \eqref{eq:twovalues} for equality in the unbalanced bound.
\end{proof}

At higher orders, noncollinear equality does occur.

\begin{example}[A noncollinear sixth-moment equality]\label{ex:cubic}
Take $n=2$, $r_1=r_2=1$, and
\[
 b_1=1,\qquad b_2=c=\frac{-5+i\sqrt{39}}8.
\]
Then $4c^2+5c+4=0$. For $Q=D(I+J)$, put $\tau=\tr Q=2(1+c)$. Its determinant is $3c=\tau^2$, so Cayley--Hamilton gives
\[
 Q^2-\tau Q+\tau^2I=0,\qquad Q^3=-\tau^3I.
\]
Consequently $C^3=Q^3$ is scalar, although $C$ is not normal. Hence equality holds throughout \eqref{eq:evensandwich} for $m=3$. In polynomial terms, this is the cubic $P(z)=z(z-1)(z-1/c)$.
\end{example}

\begin{example}[Regular polygons]\label{ex:polygon}
Let $r_j=\rho>0$ and $b_j=\beta\omega^{j-1}$, where $\beta\ne0$ and $\omega=e^{2\pi i/n}$. The determinant identity in Lemma~\ref{lem:representation} reduces to
\[
 \det(\lambda I-Q)=\lambda^n-(1+n\rho)\beta^n.
\]
Indeed, if $g(\lambda)=\lambda^n-\beta^n$, then
$\sum_jb_j\prod_{\ell\ne j}(\lambda-b_\ell)=\lambda g'(\lambda)-ng(\lambda)=n\beta^n$.
Thus
$Q^n=C^n=(1+n\rho)\beta^nI$.
For $n\ge3$ this gives noncollinear equality at order $2n$ and at every positive multiple of that order.
\end{example}

\section{Generating functions and stability}

\subsection{A closed generating determinant}

All the quadratic exterior refinements can be collected in one rank-two determinant.

\begin{proposition}\label{prop:generating}
For $t\ge0$, set
\[
 d_j(t)=1+t|b_j|^2,\qquad
 L(t)=\sum_j\frac{r_jt|b_j|^2}{d_j(t)},\qquad
 B(t)=\sum_j\frac{r_jb_j}{d_j(t)}.
\]
Then
\begin{equation}\label{eq:generating}
 \prod_j(1+t|\lambda_j|^2)
 \le \det(I+tC^*C)
 =\prod_jd_j(t)\bigl((1+L(t))^2+t|B(t)|^2\bigr).
\end{equation}
The inequality holds coefficient by coefficient as a polynomial in $t$. For any fixed $t>0$, equality holds if and only if the $b_j$ are collinear.
\end{proposition}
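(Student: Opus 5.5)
The plan has three parts: reduce the coefficient inequality to Theorem~\ref{thm:exterior}, compute $\det(I+tC^*C)$ by treating $K$ as a rank-one perturbation, and read off the equality case from the $k=1$ coefficient.

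\emph{Coefficientwise inequality.} Expand both sides in powers of $t$. On the left, the coefficient of $t^k$ is $\E_{k,2}(w)=e_k(|\lambda_1|^2,\ldots,|\lambda_n|^2)$. On the right, $\det(I+tC^*C)=\sum_k t^k e_k(s_1(C)^2,\ldots,s_n(C)^2)$, and $e_k$ of the squared singular values equals $\tr\bigl((C^{[k]})^*C^{[k]}\bigr)=\|C^{[k]}\|_F^2$. So the coefficientwise inequality is exactly Schur's inequality \eqref{eq:schur} applied to $C^{[k]}$. This is the first step of the proof of Theorem~\ref{thm:exterior}, so the coefficient of $t^k$ on the right is $\B_k(b;r)$. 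Since every coefficient is then nonnegative and dominates the corresponding coefficient on the left, the inequality also holds for each fixed $t\ge0$.

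\emph{Closed form.} Use $C^*C=HD^*KDH$ and Sylvester's identity $\det(I+XY)=\det(I+YX)$ to get
\[
 \det(I+tC^*C)=\det(I+tD^*KDK).
\]
Write $G(t)=I+tD^*KD$; this is Hermitian with $G(t)=\diag(d_j(t))+t\,\bar v v^T$, where $v=Du$. Then
\[
 I+tD^*KDK=I+tD^*KD+tD^*KDuu^T=G(t)+t\,D^*KDuu^T.
\]
Apply the matrix determinant lemma twice. First, $\det G(t)=\prod_j d_j(t)\,(1+L(t))$, because $v^T\diag(d_j)^{-1}\bar v=\sum_j r_j|b_j|^2/d_j$. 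Second, factoring $G(t)$ out of the rank-one update gives
\[
 \det(I+tD^*KDK)=\det G(t)\bigl(1+t\,u^TG(t)^{-1}D^*KDu\bigr).
\]
The main obstacle is computing $u^TG(t)^{-1}D^*KDu$ cleanly. I would use $tD^*KD=G(t)-I$, which turns the update term into
\[
 t\,u^TG(t)^{-1}D^*KDu=u^Tu-u^TG(t)^{-1}u=\sigma-u^TG(t)^{-1}u.
\]
Sherman--Morrison then gives
\[
 u^TG(t)^{-1}u=\sum_j\frac{r_j}{d_j}-\frac{t\,\abs{B(t)}^2}{1+L(t)},
\]
since $u^T\diag(d_j)^{-1}\bar v=\sum_j r_j\bar b_j/d_j=\overline{B(t)}$ and similarly for the other factor. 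Because $\sum_j r_j/d_j=\sigma-L(t)$, this yields
\[
 1+\sigma-u^TG(t)^{-1}u=(1+L(t))+\frac{t\,\abs{B(t)}^2}{1+L(t)}.
\]
Multiplying by $\det G(t)=\prod_j d_j(t)\,(1+L(t))$ gives the stated product $\prod_j d_j(t)\bigl((1+L(t))^2+t|B(t)|^2\bigr)$. I would still check the sign and conjugation bookkeeping by matching the $t^1$ coefficient against \eqref{eq:CF}.

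\emph{Equality.} For fixed $t>0$, equality of the two sides is a sum of nonnegative coefficient gaps multiplied by $t^k>0$, so it forces equality at $k=1$. That is equality in Schur's inequality for $C$, hence normality of $C$, hence collinearity of the $b_j$ by Lemma~\ref{lem:representation}. Conversely, if the $b_j$ are collinear, then $C$ is normal, so its singular values are $|\lambda_j|$ and the two sides agree identically in $t$.
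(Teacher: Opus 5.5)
Your proposal is correct and follows essentially the paper's argument. The coefficient of $t^k$ in $\det(I+tC^*C)$ is $\|C^{[k]}\|_F^2$, and Schur's inequality on $C^{[k]}$ gives the coefficientwise bound, as in the paper. The closed form comes from the low-rank structure of $K-I$: the paper applies a single rank-two update to $\diag(d_j(t))$ and evaluates a $2\times 2$ determinant, whereas you nest two rank-one updates through $G(t)=I+tD^*KD$ and Sherman--Morrison. Your bookkeeping checks out, including the $t^1$ coefficient against \eqref{eq:CF}. For equality you use only the $k=1$ gap, and hence normality of $C$; this is a slightly leaner version of the paper's appeal to the full equality statement of Theorem~\ref{thm:exterior}.
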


\begin{proof}
The coefficient of $t^k$ in $\det(I+tC^*C)$ is
$\|C^{[k]}\|_F^2=\B_k$.
Theorem~\ref{thm:exterior} proves the inequality and its equality statement (with the one-dimensional case automatic).

For the determinant formula, write $E=\diag(d_j(t))$, $v=Du$, and $s=\sum_jr_jb_j$. The matrix $C^*C$ is similar to $D^*KDK$, and
\[
 D^*KDK=|D|^2+|D|^2uu^T+\bar v(v^T+su^T).
\]
The rank-two determinant identity therefore reduces the quotient by $\det E$ to the determinant of
\[
 \begin{pmatrix}
 1+L & t\bar B\\
 s-B+sL & 1+L+ts\bar B
 \end{pmatrix}.
\]
Here $t\sum_jr_jb_j|b_j|^2/d_j=s-B$.
The displayed determinant is $(1+L)^2+t|B|^2$, as claimed.
\end{proof}

\subsection{A quantitative collinearity estimate}

Equality in the reciprocal Schoenberg inequality detects a line through the origin. Its deficit also controls the squared distance to the nearest such line. Keep $S,s,T$ from \eqref{eq:moments}, and define
\[
 F=\|C\|_F^2,\qquad
 \Delta=F-\sum_j|\lambda_j|^2,\qquad
 d_r^2=\min_{\theta\in\R}\sum_jr_j
              \bigl(\Ima(e^{-i\theta}b_j)\bigr)^2.
\]

\begin{proposition}\label{prop:stability}
Let
\[
 G=S^2-|T|^2+2\bigl(S|s|^2-\Rea(\bar s^{\,2}T)\bigr),
 \qquad
 \eta=\max_jr_j^{-1}+2+\sigma.
\]
Then
\begin{equation}\label{eq:stability}
 \Delta\ge\frac{G}{8\|C\|_{\rm op}^2}
       \ge\frac{G}{8F},\qquad
 \Delta\ge\frac{d_r^2}{4\eta}.
\end{equation}
Also,
\begin{equation}\label{eq:fourthdeficit}
 G=\tfrac12\|[C,C^*]\|_F^2
   =\tr\bigl((C^*C)^2\bigr)-\|C^2\|_F^2.
\end{equation}
No optimality is asserted for the constants in \eqref{eq:stability}.
\end{proposition}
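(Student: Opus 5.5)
The plan is to prove \eqref{eq:fourthdeficit} first and then obtain both bounds in \eqref{eq:stability} from a Schur-form estimate of the commutator.

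\emph{Step 1 (the identity \eqref{eq:fourthdeficit}).} For any square $X$, expanding $\|[X,X^*]\|_F^2=\tr\bigl([X,X^*]^2\bigr)$ gives
\[
\tfrac12\|[X,X^*]\|_F^2=\tr\bigl((X^*X)^2\bigr)-\tr\bigl(X^2(X^*)^2\bigr),
\]
and the last trace equals $\|X^2\|_F^2$. This proves the second equality in \eqref{eq:fourthdeficit}. For the first, I use \eqref{eq:commutator}: $[C,C^*]=HMH$ with $M=vv^*-\bar v v^T$ and $v=Du$. Then
\[
\|[C,C^*]\|_F^2=\tr(M^*KMK),\qquad K=I+uu^T .
\]
I expand this using the scalar products $v^*v=S$, $v^Tv=T$, $u^Tv=s$ and $u^Tu=\sigma$. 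The $I\cdot I$ term gives $\|M\|_F^2=2(S^2-|T|^2)$. The cross terms with one factor $uu^T$ give $4\bigl(S|s|^2-\Rea(\bar s^{\,2}T)\bigr)$. The term with both factors $uu^T$ is $|u^TMu|^2=|s\bar s-\bar s s|^2=0$. Together these give $2G$. This is routine bookkeeping, and it is the only computational step.

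\emph{Step 2 (first bound in \eqref{eq:stability}).} Take a Schur form $C=U(\Lambda+N)U^*$ with $N$ strictly upper triangular, so that $\Delta=\|N\|_F^2$ by \eqref{eq:schur}. Since $[\Lambda,\Lambda^*]=0$,
\[
[C,C^*]\simeq[\Lambda+N,N^*]+[N,\Lambda^*].
\]
Each commutator $[Y,Z]$ satisfies $\|[Y,Z]\|_F\le2\|Y\|_{\rm op}\|Z\|_F$. Moreover $\|\Lambda+N\|_{\rm op}=\|C\|_{\rm op}$, and $\|\Lambda\|_{\rm op}$ is the spectral radius of $C$, hence at most $\|C\|_{\rm op}$. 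Therefore $\|[C,C^*]\|_F\le4\|C\|_{\rm op}\sqrt\Delta$. Combined with $G=\tfrac12\|[C,C^*]\|_F^2$ from Step 1, this gives $\Delta\ge G/(8\|C\|_{\rm op}^2)$. The weaker form $G/(8F)$ follows from $\|C\|_{\rm op}^2\le\|C\|_F^2=F$.

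\emph{Step 3 (the collinearity bound).} Three estimates are needed.
\begin{itemize}
\item \emph{Lower bound for $G$.} Since $|T|\le S$, we have $\Rea(\bar s^{\,2}T)\le|s|^2S$, so $G\ge S^2-|T|^2$.
\item \emph{The distance $d_r$.} Write $\bigl(\Ima(e^{-i\theta}b_j)\bigr)^2=\tfrac12\bigl(|b_j|^2-\Rea(e^{-2i\theta}b_j^2)\bigr)$. Minimizing over $\theta$ gives $d_r^2=(S-|T|)/2$. Hence
\[
S^2-|T|^2=(S+|T|)(S-|T|)\ge 2S\,d_r^2 .
\]
\item \emph{Upper bound for $F$.} Use \eqref{eq:CF} together with $|b_j|^2\le(\max_i r_i^{-1})\,r_j|b_j|^2$ and Cauchy--Schwarz $|s|^2\le\sigma S$. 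This gives $F\le(\max r^{-1}+2+\sigma)S=\eta S$.
\end{itemize}
Inserting these into $\Delta\ge G/(8F)$ gives $\Delta\ge 2Sd_r^2/(8\eta S)=d_r^2/(4\eta)$. If $S=0$ the statement is vacuous, since then all $b_j=0$, which is excluded.

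The main obstacle is the expansion in Step 1: the signs of the cross terms must come out exactly as $S|s|^2-\Rea(\bar s^{\,2}T)$. I would organize it by writing $M$ in terms of the two rank-one pieces and using $M^*=-M$.
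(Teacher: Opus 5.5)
Your proposal is correct and follows the paper's proof essentially step for step. It uses the same expansion of $\tr(MKMK)$ through $\|M\|_F^2$, $\|Mu\|^2$ and $u^TMu=0$, the same Schur-form splitting $[C,C^*]\simeq[C,N^*]+[N,\Lambda^*]$, and the same three estimates $d_r^2=(S-|T|)/2$, $G\ge(S-|T|)(S+|T|)\ge 2Sd_r^2$ and $F\le\eta S$.

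One small slip is in your closing remark. $M=vv^*-\bar v v^T$ is Hermitian, as $[C,C^*]$ must be, and antisymmetric. So the property to use is $M^T=-M$, which is what gives $u^TMu=0$, not $M^*=-M$. Your stated values for the three pieces are nonetheless correct.
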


\begin{proof}
Put $M=vv^*-\bar v v^T$, so $[C,C^*]=HMH$. Straightforward rank-one products give
\[
 \|M\|_F^2=2(S^2-|T|^2),\quad u^TMu=0,\quad
 \|Mu\|^2=2\bigl(S|s|^2-\Rea(\bar s^{\,2}T)\bigr).
\]
Expanding $\tr(MKMK)$ gives $\|[C,C^*]\|_F^2=2G$.
Expanding the square of the commutator proves the second identity in \eqref{eq:fourthdeficit}.

In a unitary Schur basis write $C=\Lambda+N$, where $\Lambda$ is diagonal and $N$ is strictly upper triangular. Then $\|N\|_F^2=\Delta$, and
\[
 [C,C^*]=[C,N^*]+[N,\Lambda^*].
\]
Since $\|\Lambda\|_{\rm op}\le\|C\|_{\rm op}$, it follows that
$\|[C,C^*]\|_F\le4\|C\|_{\rm op}\sqrt{\Delta}$.
This proves the first two estimates.

Finally, minimizing the elementary trigonometric expression defining $d_r^2$ gives
$d_r^2=(S-|T|)/2$. The second term in $G$ is nonnegative because $|T|\le S$, and hence
\[
 G\ge(S-|T|)(S+|T|)\ge2S d_r^2.
\]
Cauchy--Schwarz gives $|s|^2\le\sigma S$, so \eqref{eq:CF} implies $F\le\eta S$. Substitution proves the last estimate in \eqref{eq:stability}.
\end{proof}

\noindent \textbf{AI Declaration:} Artificial intelligence (AI) tools were used to test the inequalities in this paper and to provide examples. An LLM also provided feedback on an earlier draft, pointed out several extensions to its original results, and assisted in re-writing of this paper. The author assumes full responsibility for the accuracy of all results presented here.

\end{document}